\newtheorem{theorem}{Theorem}
\newtheorem{lemma}[theorem]{Lemma}
\newtheorem{proposition}[theorem]{Proposition}
\newtheorem{corollary}[theorem]{Corollary}
\theoremstyle{definition}
\newtheorem{definition}[theorem]{Definition}
\newtheorem{assumptions}[theorem]{Assumptions}
\newtheorem{example}[theorem]{Example}
\theoremstyle{remark}
\theoremstyle{remark}
\newtheorem{remark}[theorem]{Remark}
\newcommand{\e}{\mathrm{e}}
\newcommand{\ve}{\varepsilon}
\newcommand{\la}{\lambda}
\newcommand{\me}{\mathsf{e}}
\newcommand{\mv}{\mathsf{v}}
\newcommand{\mE}{\mathsf{E}}
\newcommand{\mV}{\mathsf{V}}
\newcommand{\mG}{\mathsf{G}}
\newcommand{\Ker}{\mathrm{Ker}}
\newcommand{\diag}{\mathrm{diag}}
\newcommand{\sgrT}{\mathbb{T}}
\newcommand{\smallbullet}{} 
\DeclareRobustCommand\smallbullet{%
  \mathord{\mathpalette\smallbullet@{0.5}}%
}
\newcommand{\smallbullet@}[2]{%
  \vcenter{\hbox{\scalebox{#2}{$\m@th#1\bullet$}}}%
}
\newcommand{\comp}{\mathbb{C}}
\newcommand{\real}{\mathbb{R}}
\newcommand{\nat}{\mathbb{N}}
\newcommand{\Dir}{\mathbb{D}}
\newcommand{\mcD}{\mathcal{D}}
\newcommand{\mcH}{\mathcal{H}}
\newcommand{\mcY}{\mathcal{Y}}
\newcommand{\dx}{\, dx}
\newcommand{\dt}{\, dt}
\newcommand{\ds}{\, ds}
\DeclareMathOperator{\Tr}{Tr}
\DeclareMathOperator{\HS}{HS}
\numberwithin{equation}{section} \numberwithin{theorem}{section}
\begin{document}

\title{On the strong Feller property of the heat equation on quantum graphs with Kirchhoff noise}

\author{Mohamed Fkirine$^1$} \email{mohamed.fkirine@tuni.fi}
\address{$^1$Mathematics Research Centre, Faculty of Information Technology and Communication SciencesTampere University, Korkeakoulunkatu 7, Tampere, 33720, Finland}

\author{Mihály Kovács$^2$}\email{mkovacs@math.bme.hu}
\address{$^2$Faculty of Information Technology and Bionics, P\'azm\'any P\'eter Catholic University, Práter u. 50/A, Budapest, 1083, Hungary}


\author{Eszter Sikolya$^3$}\email{eszter.sikolya@ttk.elte.hu}
\address{$^3$Department of Applied Analysis and Computational Mathematics, Eötvös Loránd University, Pázmány P. stny. 1/c, Budapest, 1117, Hungary}

\begin{abstract}We consider a so-called quantum graph with standard continuity and Kirchhoff vertex conditions where the Kirchhoff vertex condition is perturbed by Gaussian noise. We show that the quantum graph setting is very different from the classical one dimensional boundary noise setting, where the transition semigroup is known to be strong Feller, by giving examples and counterexamples to the strong Feller property. In particular, when the graph is a tree, and there is noise present in all of the boundary vertices except one, then the transition semigroup associated with the problem is strong Feller at any time $T>0$. This turns out to be also a necessary condition for equilateral star graphs. We also comment on the existence and uniqueness of the invariant measure and the regularity of the solution.\end{abstract}

\keywords{Quantum graph, transition semigroup, white-noise vertex conditions, strong Feller property, null-controllability, invariant measure}

\subjclass[MSC Classification]{81Q35, 60H15, 35R60, 35R02, 47D06, 93E03}

\maketitle

	\section{Introduction}\label{sec:intro}

	We consider a so-called quantum graph; that is, is a metric graph $\mG$, equipped with a diffusion operator on each edge and certain vertex conditions. Our terminology follows \cite[Chap.~1]{BeKu} (see also \cite{Be17} and \cite{Mu14}), we list here only the most important concepts. The graph $\mG$ consists of a finite set of vertices $\mV = \{\mv\}$ and a finite set $\mE = \{\me\}$ of edges connecting the vertices. We denote by $n=|\mV|$ the number of vertices. In general, a metric graph is assumed to have directed edges; that is edges having an origin and a boundary vertex. In our case, dealing with self-adjoint operators, we can just consider undirected edges. Each edge is assigned a positive length $\ell_{\me}\in (0,+\infty)$, and we denote by $x\in [0,\ell_{\me}]$ a coordinate of $\mG$. 
	
	The metric graph structure enables one to speak about functions $z$ on $\mG$, defined along the edges such that for any coordinate $x$, the function takes its value $z(x)$. If we emphasize that $x$ is taken from the edge $\me$, we write $z_{\me}(x)$. Thus, a function $z$ on $\mG$ can be regarded as a vector of functions that are defined on the edges, therefore we will also write
	\[z=\left(z_{\me}\right)_{\me\in \mE},\]
	and consider it as an element of a product function space. 
	
	To write down the vertex conditions in the form of equations, for a given function $z$ on $\mG$ and for each $\mv\in\mV$, we introduce the following notation. For any $\mv\in\mV$, we denote by $\mE_{\mv}$ the set of edges incident to the vertex $\mv$, and by $d_{\mv}=|\mE_{\mv}|$ the degree of $\mv$. Let $z_{\me}(\mv)$ denote the value of $z$ in $\mv$ along the edge $\me$ in the case $\me\in \mE_{\mv}$. Let $\mE_{\mv}=\{\me_1,\dots ,\me_{d_{\mv}}\}$, and define
	\begin{equation}\label{eq:Fv}
		Z(\mv)=\left(z_{\me}(\mv)\right)_{\me\in \mE_{\mv}}=\begin{pmatrix}
			z_{\me_1}(\mv)\\
			\vdots\\
			z_{\me_{d_{\mv}}}(\mv)
		\end{pmatrix}\in \real^{d_{\mv}},
	\end{equation}
	the vector of the function values in the vertex $\mv$. 
	
	For vertices with $d_{\mv}>1$, let $I_{\mv}$ be the bi-diagonal matrix
	\begin{equation}\label{eq:Iv}
		I_{\mv}=\begin{pmatrix}
			1 & -1 & & \\
			& \ddots & \ddots & \\
			& & 1 & -1 
		\end{pmatrix}\in\real^{(d_{\mv}-1)\times d_{\mv}}.
	\end{equation}
	It is easy to see that if we set
	\begin{equation}\label{eq:contv}
		I_{\mv}Z(\mv)=0_{\real^{d_{\mv}-1}},
	\end{equation} 
	this means that all the function values coincide in $\mv$.  If it is satisfied for each vertex $\mv\in\mV$ with $d_{\mv}>1$ for a function $z$ on $\mG$, which is continuous on each edge, including the one-sided continuity at the endpoints, then we call $z$ \emph{continuous on} $\mG$.\smallskip
	
	Similarly, for a function $z$ on $\mG$ which is differentiable on each edge; that is, $z'_{\me}$ exists for each $\me\in\mE$ including the one-sided derivatives at the endpoints, we set
	\begin{equation}\label{eq:Fvv}
		Z'(\mv)=\left(z'_{\me}(\mv)\right)_{\me\in\mE_{\mv}}=\begin{pmatrix}
			z'_{\me_1}(\mv)\\
			\vdots\\
			z'_{\me_{d_{\mv}}}(\mv)
		\end{pmatrix}\in \real^{d_{\mv}},
	\end{equation}
	the vector of the function derivatives in the vertex $\mv$. We will assume throughout the paper that derivatives are taken in the directions away from the vertex $\mv$ (i.e.~into the edge), see \cite[Sec.~1.4.]{BeKu}.\\
 
Let $(\Omega,\mathscr{F},\mathbb{P})$ is a complete probability space endowed with a right-continuous filtration $\mathbb{F}=(\mathscr{F}_t)_{t\in [0,T]}$.
	Let the process 
	\[(\beta(t))_{t\in [0,T]}=\left(\left(\beta_{\mv}(t)\right)_{t\in [0,T]}\right)_{\mv\in\mV},\] 
	be an $\real^{n}$-valued Brownian motion (Wiener process) with covariance matrix 
	\[Q\in \real^{n\times n}\]
	with respect to the filtration $\mathbb{F}$; that is,
	$(\beta(t))_{t\in [0,T]}$ is $(\mathscr{F}_t)_{t\in [0,T]}$-adapted and for all $t>s$, $\beta(t)-\beta(s)$ is independent of $\mathscr{F}_s$.
 
	We consider the problem written formally as
	\begin{equation}\label{eq:stochnet}
		\left\{\begin{aligned}
			\dot{z}_{\me}(t,x) & =  (c_{\me} z_{\me}')'(t,x)-p_{\me}(x) z_{\me}(t,x), &x\in (0,\ell_{\me}),\; t\in(0,T],\; \me\in\mE,\;\; & (a)\\
			0 & = I_{\mv}Z(t,\mv),\;   &t\in(0,T],\; \mv\in\mV,\;\;& (b)\\
			\dot{\beta}_{\mv}(t) & = C(\mv)^{\top}Z'(t,\mv), & t\in(0,T],\; \mv\in\mV,\;\;& (c)\\
			z_{\me}(0,x) & =  z_{0,\me}(x), &x\in [0,\ell_{\me}].\; \me\in\mE.\;\;& (d)
		\end{aligned}
		\right.
	\end{equation}
Here $\dot{z}_{\me}$ and $z_{\me}'$ denote the time and space derivative, respectively, of $z_{\me}$. For each $\mv\in\mV$, $Z(t,\mv)$ and $Z'(t,\mv)$ denote the vector of the function values in $\mv$ introduced in \eqref{eq:Fv} and \eqref{eq:Fvv}, respectively, for the function on $\mG$ defined as
	\[z(t,\cdot)=\left(z_{\me}(t,\cdot)\right)_{\me\in\mE}.\]
	Analogously to \eqref{eq:Fv}, for each $\mv\in\mV$ and $\mE_{\mv}=\{\me_1,\dots ,\me_{d_{\mv}}\}$ the 1-row matrix in $(\ref{eq:stochnet}c)$ is defined by
	\[C(\mv)^{\top}=\left(c_{\me_1}(\mv), \dots ,c_{\me_{d_{\mv}}}(\mv)\right)\in\real^{1\times d_{\mv}}.\]
	Equations $(\ref{eq:stochnet}b)$ assume continuity of the function $z(t,\cdot)$ on the metric graph $\mG$, cf.~\eqref{eq:contv}. If we set $0\in\real^{d_{\mv}}$ on the left-hand-side of equations $(\ref{eq:stochnet}c)$, we obtain the usual \textit{Kirchhoff vertex conditions} for the system which is the generalization of the Neumann boundary condition on an interval, cf.~(\ref{eq:netcp}c). Therefore, we refer to the noise in equation $(\ref{eq:stochnet}c)$ as \textit{Kirchhoff noise}.
	
	It is clear by definition, that $(\ref{eq:stochnet}b)$ consists of 
	\begin{equation}\label{eq:conteqnumber}
		\sum_{\mv\in\mV}(d_{\mv}-1)=2|\mE|-|\mV|=2|\mE|-n
	\end{equation} 
	equations. At the same time, $(\ref{eq:stochnet}c)$ consists of $n$ equations. Hence, we have altogether $2|\mE| $ (boundary or vertex) conditions in the vertices. 
	
 The functions $c_{\me}$ are (variable) diffusion coefficients or conductances, and we assume that
	\[c_{\me}\in \mathrm{Lip}[0,\ell_{\me}],\quad 0< c_0\leq c_{\me}(x),\quad\text{for all }x\in [0,\ell_{\me}]\text{ and }  \me\in\mE.\]
	
	The functions $p_{\me}$ are non-negative, bounded functions, hence
	
	\begin{equation}
		0\leq p_{\me}\in L^{\infty}(0,\ell_{\me}),\quad \me\in\mE.
	\end{equation}
	In equation~$(\ref{eq:stochnet}d)$ we pose the initial conditions on the edges.\\

 In the recent paper \cite{KS23} the authors investigated (a nonlinear version of) \eqref{eq:stochnet} and proved, among other things, that the transition semigroup associated with the \eqref{eq:stochnet} is Feller. However, the strong Feller property of the solution has not been established there and this property is in the focus of the present paper. The model  \eqref{eq:stochnet} can be viewed as the quantum graph analogue of the model considered by Da Prato and Zabczyk in \cite{DPZ93} in case of a single interval and classical Neumann boundary noise. In the latter paper the authors show that the transition semigroup associated with the problem is always strong Feller. It turns out that in our setting this is not true in general. We show that in certain situations the transition semigroup is strong Feller, however, we also show that there are ample situations where the strong Feller property fails to hold even for simple configurations such as a equilateral star graph if the system is not noisy enough.\\

One general strategy for investigating the strong Feller property, see for example, \cite[Chapter 9.4]{DPZbook} is to consider an associated null controllability problem. While there exist several results for control problems on quantum graphs, these mainly deal with the wave equation, see for example, \cite{Avd1,AvdMikh08,AvdZhao22,Bel,LLS94,Zuazuabook} for an admittedly incomplete list. The Dirichlet boundary vertex null controllability of the heat equation for metric trees is mainly approached via an exact controllability problem for the wave equation \cite{Avd1,AvdMikh08} with the exception of the recent paper \cite{Barcena}, where the the Hilbert Uniqueness Method is used, in
particular, the authors study the observability of the adjoint system with a Carleman inequality. We are not aware of results concerning the Neumann vertex control of the heat equation on metric graphs. There are several possible approaches to the problem. One could study the adjoint problem as in \cite{Barcena} and use the Hilbert Uniqeness Method with a Carleman inequality, consider the exact controllability of the associated wave equation or try to use the method of moments directly. This latter approach would be rather problematic to use due to the lack of sufficient information on the separation of consecutive eigenvalues of the system operator, in general. We will hence consider the exact Neumann-controllability of the associated wave equation which was investigated in \cite{AvdZhao22} through a method originally introduced in \cite{AvdZhao21} for Dirichlet boundary control on metric trees. In \cite{AvdZhao22}, unlike in \cite[Chapter 5]{LLS94}, it is not assumed that there is a boundary vertex with zero Dirichlet boundary condition. This latter is essential in \cite[Chapter 5]{LLS94} as it ensures that the energy in fact is a norm on the state space. We therefore rely on the theory of so-called ST-active sets from \cite{AvdZhao22}, which is essential in dealing with control problems for the wave equation for general metric graphs, to show that the wave equation is exactly controllable on metric trees if one applies Neumann controls at all of the boundary vertices except for possibly one of them, where  Neumann zero boundary condition is assumed. One can then apply the theory of vector-exponentials in a similar fashion as in \cite{Avd1,AvdMikh08} to obtain the null-controllability of the heat equation on metric trees if one applies Neumann controls at all of the boundary vertices except one, where  Neumann zero boundary condition is assumed. \\

Having this null-controllability result at hand, Theorem \ref{thm:main} shows  that if the covariance matrix $Q=\diag(q_{\mv})_{\mv\in\mV}$ in  \eqref{eq:kovmtx} is diagonal, and $q_{\mv}\neq 0$ for all boundary vertices except for, possibly, one of them, then the transition semigroup associated with the problem is strong Feller at any time $T>0$. It turns out that for the Laplace operator on the equilateral star-graph the conditions of Theorem \ref{thm:main} is also necessary. Finally, we mention here that for the wave equation it is known that for  
general metric trees (at least for Dirichlet control) it is also necessary to apply controls at all of the boundary vertices except for, possibly, one of them. One, unfortunately, cannot use this result to conclude the same general result for the null controllability of the heat equation as one only has the exact controllability of the wave equation yields the null controllability of the heat equation on appropriate state spaces and not vice versa.
\\
	
     The paper is organized as follows.
     In Section \ref{sec:control} we give a short introduction into control theory for operator semigroups containing the notions and results we need in subsequent sections. \\
     
    In Section \ref{sec:strongFeller} we consider a general abstract boundary noise problem \eqref{eq:opAKmax1gen} and identify the associated abstract null-control problem to consider in order to prove the strong Feller property of the transition semigroup. \\
    
    In Section \ref{sec:strongFellernetwork} we first show that the problem \eqref{eq:stochnetKnoise} can be treated using the setting of the Section \ref{sec:strongFeller}. In Theorem \ref{thm:main} we show that if $c_e=1$ and the covariance matrix $Q=\diag(q_{\mv})_{\mv\in\mV}$ in  \eqref{eq:kovmtx} is diagonal, and $q_{\mv}\neq 0$ for all boundary vertices except for, possibly one of them, then the transition semigroup associated with the problem is strong Feller at any time $T>0$.
    Then, in Example \ref{ex:eqst}, we demonstrate that for equilateral star graphs with zero potential and the Laplace operator on the edges the transition semigroup is strong Feller if and only if noise is present at all boundary vertices, except for, possibly, one of them; that is, the assumptions of Theorem \ref{thm:main} are also necessary in this case. We end the section with Example \ref{ex:loop} which shows that the existence of a loop in the graph always destroys the strong Feller property. We would like to highlight that these examples show that quantum graph setting is very different from the classical one dimensional boundary noise setting, where the transition semigroup is known to be strong Feller~\cite{DPZ93}. \\
    
    In Section \ref{sec:invarmeasure}, in Theorem \ref{inv_measure}, we give a necessary and sufficient condition for the existence of the invariant measure for the solutions of the general problem \eqref{eq:opAKmax1gen}. The result \cite[Thm.~9.1.1 (iii)]{DPZbook96} states that if the transition semigroup is strong Feller, then the invariant measure exists if and only if the semigroup $\mathbb T$ is exponentially stable. We show in Corollary \ref{cor:inv} that for the quantum graph problem \eqref{eq:stochnetKnoise} the assumption being strong Feller can be omitted.\\
    
    In Appendix \ref{sec:regularity} we prove a regularity result in Proposition \ref{prop:solH14} which shows that the solutions of \eqref{eq:stochnetKnoise} have a continuous version in the Sobolev space of the edges of order less than $\frac{1}{4}$ without the uniform boundeness assumption of the vertex values of eigenfunctions of \cite[Theorem 3.8]{KS23}.\\
    
    In Appendix \ref{sec:STtree} we cite definitions and results from \cite{AvdZhao21, AvdZhao22} and show, in Proposition \ref{prop:STtree}, how they can be applied for the situation in Theorem \ref{thm:main}.

 \section{A short introduction to control theory}\label{sec:control}
 
Throughout this section, we assume that $H$, $U$ are Hilbert spaces and $T>0$ is a real number. First, we introduce some classical background on semigroup theory. Let $A$ be the generator of a strongly continuous semigroup $\mathbb{T} = (\mathbb{T}(t))_{t\geq0}$ on $H$, with growth bound $\omega_0(\mathbb{T})$. We denote by $H_{-1}$ the completion of $H$ with respect to the norm $\|x\|_{-1} = \|R(\la, A)x\|$, where $R(\la, A) := (\la - A)^{-1}$, and $\la$ is an arbitrary (but fixed) element in the resolvent set $\rho(A)$. It is easy to verify that $H_{-1}$ is independent of the choice of $\la$, since different values of $\la$ lead to equivalent norms on $H_{-1}$. The space $H_{-1}$ is isomorphic to $\mcD(A^*)^*$  and we have $\mcD(A) \subset H \subset H_{-1}$ densely and with continuous embedding. The semigroup $\mathbb{T}$ extends to a strongly continuous semigroup on $H_{-1}$, whose generator is an extension of $A$, with domain $H$. We denote the extensions of $\mathbb{T}$ and $A$ by $\mathbb{T}_{-1}:= (\mathbb T_{-1}(t))_{t\geq0}$ and $A_{-1}$, respectively.\\

 Next, consider the control system
 	\begin{equation}\label{eq:conpro1}
		\begin{cases}
			\dot{z}(t)  =A_{-1}z(t)+Bu(t), & t\in(0,T],\\
			z(0) =z_0\in H,&
		\end{cases}
	\end{equation}
 where $B\in\mathcal{L}(U,H_{-1})$ and $u\in L^2(0,T;U)$ is a control function. The mild solution of the equation \eqref{eq:conpro1} is given by
	\begin{equation}\label{eq:solcontrol}
		z(t)=\mathbb{T}(t)z_0+\Phi_tu, \qquad u\in L^2(0,T;U),\;\;z_0\in H,
	\end{equation}
	where $\Phi_t\in\mathcal{L}(L^2(0,T;U),H_{-1})$ is defined by
	\begin{equation}\label{inp-map}
		\Phi_tu:=\int_0^t \mathbb{T}_{-1}(t-s)Bu(s)ds.
	\end{equation}
	Notice that in the above formula, $\mathbb{T}_{-1}$ acts on $H_{-1}$ and the integration is carried out in $H_{-1}$. This motivates the following definition.
	\begin{definition}\label{defi:admcontrop}
		The operator $B\in\mathcal{L}(U,H_{-1})$ is called an \emph{admissible control operator}
		for $A$ if \text{Ran}$(\Phi_\tau)\subset H$ for some $\tau>0$ (hence, for all $\tau>0$).
	\end{definition}
	It is worth noting that if $B$ is an admissible control operator for $A$, then the closed graph theorem guarantees that $\Phi_t\in\mathcal{L}(L^2(0,T;U),H)$ for all $t\ge 0$. As a result, for any $u\in L^2(0,T;U)$ and $z_0\in H$, the solutions $z(\cdot)$ of the initial value problem  \eqref{eq:conpro1} stay in $H$ and form a continuous $H$-valued function of $t$. The operators $\Phi_t$ are commonly referred to as \emph{input maps} associated with the pair $(A,B)$, see \cite[Def.~2.1]{Weiss89}.

	\begin{definition}\label{defi:0controllable}
		Let $B\in\mathcal{L}(U,H_{-1})$ be an admissible control operator for $A$. Given $T>0$, the system \eqref{eq:conpro1} is said to be 
   \begin{enumerate}
        \item[$\bullet$] \emph{null-controllable in time $T$}, if for any $z_0\in H$ there exists a control $u\in L^2(0,T;U)$ such that  the solution of \eqref{eq:conpro1} satisfies $z(T)=0$.
        \item[$\bullet$] \emph{exactly controllable in time $T$}, if for any $z_0,\,z_1\in H$ there exists a control $u\in L^2(0,T;U)$ such that  the solution of \eqref{eq:conpro1} satisfies $z(T)=z_1$. 
   \end{enumerate}
	\end{definition}
It is easy to see that null controllability in time $T$ is equivalent to the following property: 
\begin{equation}\label{null-con}
	\mathrm{Ran}\, \mathbb{T}(T)\subset \mathrm{Ran}\, \Phi_T.
\end{equation}\\

 In the following we will deal with an abstract boundary control problem of the form
	\begin{align}\label{eq:boundconpro}
		\begin{cases}
			\dot{z}(t)=A_{\max}z(t), & t\in(0,T], \\
			Gz(t)=u(t), &t\in(0,T],\\
			z(0)=z_0\in H,
		\end{cases}
	\end{align}
	where $u\in L^2(0,T;U)$ is a control function. 
 \begin{assumptions}\label{ass:AmaxG}  
 Assume that $A_{\max}:\mcD(A_{\max})\subset H\to H $ is a closed linear operator, where the embedding $\mcD(A_{\max})\to H$ is a continuous linear map with dense range, 
and $G:\mcD(A_{\max})\to U$ is a linear operator such that the following assumptions are satisfied.
	\begin{enumerate}
		\item[\textbf{(A1)}] The operator $G:\mcD(A_{\max})\to U$ is surjective.
		\item[\textbf{(A2)}] The operator 
        \begin{equation}\label{eq:Agenerator}
            \begin{split}
                A&:=(A_{\max})|_{\mcD(A)},\\
                \mcD(A)&:=\left\{z\in\mcD(A_{\max})\colon Gz=0_H\right\}=\ker G
            \end{split}
        \end{equation} 
        generates a $C_0$-semigroup $\mathbb{T}:=(\mathbb{T}(t))_{t\geq0}$ on $H$.
	\end{enumerate} 
\end{assumptions}
 
 It is shown by Greiner \cite[Lemmas 1.2, 1.3]{Gr87} that under the above assumptions, the domain $\mcD(A_{\max})$ can be viewed as the direct sum of $ \mcD(A)$ and $\ker(\lambda-A_{\max})$ for any $\lambda\in\rho(A)$. Moreover, the operator $G|_{\ker(\lambda-A_{\max})}$ is invertible and the inverse
	\begin{equation}    
		\mathbb{D}_\lambda:=\left(G|_{\ker(\lambda-A_{\max})}\right)^{-1}: U\to \ker(\lambda-A_{\max})\subset H, \qquad \lambda\in\rho(A),
	\end{equation}
	is bounded. The operator $\mathbb{D}_\lambda$ is called the \emph{Dirichlet operator} associated with $A_{\max}$ and $G$.

    For a fixed $\lambda\in\rho(A)$ we consider the following \emph{control operator}
	\begin{equation}\label{eq:controlopB}
	    B:=(\lambda-A_{-1})\mathbb{D}_\lambda\in\mathcal{L}(U,H_{-1}).
	\end{equation}
  Using the resolvent identity, it is straightforward that the definition of $B$ is independent of the choice of $\lambda$.\\

  	As for any $u\in U$ and  $\lambda\in\rho(A)$ we have $\mathbb{D}_\lambda u\in \ker (\lambda-A_{\max})$ so $\lambda\mathbb{D_\lambda}u=A_{\max}\mathbb{D}_\lambda u$. Hence,
	\begin{align*}
		\left(A_{\max}-A_{-1}\right)\mathbb{D}_\lambda u=(\lambda-A_{-1})\mathbb{D}_\lambda u=Bu.
	\end{align*}
	Since $\mathbb{D}_\lambda$ is the inverse of $G|_{\ker(\lambda-A_{\max})}$ and $\mcD(A_{\max})=\mcD(A)\oplus \ker(\lambda-A_{\max})$, it follows that
	\begin{equation*}
		A_{\max}=(A_{-1}+BG)|_{\mcD(A_{\max})}.
	\end{equation*}
    Thus, by $Gz(t)=u(t)$, the system \eqref{eq:boundconpro} is equivalent to the problem \eqref{eq:conpro1}. We then have the following proposition; see, for instance, \cite[Rem.~10.1.4.]{TuWe09}.

\begin{proposition}\label{prop:boundcontequiv}
   If Assumptions \ref{ass:AmaxG} are satisfied, then the boundary control system \eqref{eq:boundconpro} can be reformulated as a control problem of the form \eqref{eq:conpro1}   with $A$ defined in \eqref{eq:Agenerator} and $B$ given by \eqref{eq:controlopB}.
\end{proposition}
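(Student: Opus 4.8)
The plan is to reduce everything to the single operator identity
\[
A_{\max}=(A_{-1}+BG)\big|_{\mcD(A_{\max})},
\]
which has in fact already been derived in the discussion preceding the statement; once this is in hand, the reformulation is obtained by substituting the boundary condition $Gz(t)=u(t)$ into the right-hand side. First I would record Greiner's direct sum decomposition $\mcD(A_{\max})=\mcD(A)\oplus\ker(\lambda-A_{\max})$, valid for any fixed $\lambda\in\rho(A)$, together with the fact that $\mathbb{D}_\lambda=\left(G|_{\ker(\lambda-A_{\max})}\right)^{-1}$ is bounded and that $B=(\lambda-A_{-1})\mathbb{D}_\lambda\in\mathcal{L}(U,H_{-1})$. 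These are the structural inputs coming from Assumptions \ref{ass:AmaxG}, and everything else is bookkeeping.

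To verify the identity itself, I would take an arbitrary $z\in\mcD(A_{\max})$ and split it as $z=y+v$ with $y\in\ker G=\mcD(A)$ and $v\in\ker(\lambda-A_{\max})$. Applying $G$ annihilates the first summand, so $Gz=Gv$ and hence $v=\mathbb{D}_\lambda Gz$. On the one hand $A_{\max}z=Ay+\lambda v$, using $A_{\max}v=\lambda v$. On the other hand, by the definition \eqref{eq:controlopB} of $B$ one has $BGz=(\lambda-A_{-1})\mathbb{D}_\lambda Gz=(\lambda-A_{-1})v$, whence $(A_{-1}+BG)z=A_{-1}y+A_{-1}v+(\lambda-A_{-1})v=Ay+\lambda v$, where I used $A_{-1}|_{\mcD(A)}=A$. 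The two expressions coincide, which establishes the identity; the resolvent identity shows that neither $B$ nor the identity depends on the choice of $\lambda\in\rho(A)$.

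With the identity established, the equivalence of the two systems is immediate at the level of sufficiently regular (classical) trajectories: if $z(\cdot)$ solves \eqref{eq:boundconpro}, then $z(t)\in\mcD(A_{\max})$ and $Gz(t)=u(t)$, so
\[
\dot z(t)=A_{\max}z(t)=A_{-1}z(t)+BGz(t)=A_{-1}z(t)+Bu(t),
\]
which is precisely \eqref{eq:conpro1}; conversely, a trajectory of \eqref{eq:conpro1} remaining in $\mcD(A_{\max})$ satisfies $Gz(t)=u(t)$ and solves the boundary problem. The place where I expect to have to be careful is the passage from this formal equivalence to the intended notion of solution: for general $u\in L^2(0,T;U)$ the mild solution must be read through the variation-of-constants formula \eqref{eq:solcontrol}–\eqref{inp-map} in the extrapolation space $H_{-1}$, and one invokes the admissibility of $B$ (checked separately) to guarantee that $z(t)$ in fact stays in $H$. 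Thus the genuine content is the operator identity, and the main obstacle is bookkeeping in the correct function spaces rather than any deeper difficulty; I would supply the identity above as the verification and cite \cite[Rem.~10.1.4.]{TuWe09} for the standard formulation.
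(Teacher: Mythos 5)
Your proposal is correct and takes essentially the same route as the paper: both hinge on the operator identity $A_{\max}=(A_{-1}+BG)|_{\mcD(A_{\max})}$, obtained from Greiner's decomposition $\mcD(A_{\max})=\mcD(A)\oplus\ker(\lambda-A_{\max})$ together with the relation $(\lambda-A_{-1})\mathbb{D}_\lambda=B$, followed by substituting the boundary condition $Gz(t)=u(t)$ and citing \cite[Rem.~10.1.4.]{TuWe09}. The only difference is presentational --- you verify the identity on an arbitrary $z=y+v$ explicitly, whereas the paper computes $(A_{\max}-A_{-1})\mathbb{D}_\lambda u=Bu$ and lets the direct sum decomposition do the rest --- and your added remarks on mild solutions and admissibility match the paper's treatment in Section \ref{sec:control}.
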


	\section{Strong Feller property  of the transition semigroup}\label{sec:strongFeller}

We first aim to investigate the strong Feller property of the mild solution of the following formal general boundary noise problem on the separable Hilbert space $H$ with boundary space $U$:
\begin{equation}\label{eq:opAKmax1gen}
		\begin{cases}
			\dot{w}(t)=A_{\max}w(t), &t\in(0,T]\\
			Gw(t)=\dot\beta(t), &t\in(0,T],\\
			w(0)=z_0,
		\end{cases}
\end{equation}
where the process 
	\[(\beta(t))_{t\in [0,T]},\]
 is an $U$-valued Brownian motion with covariance operator $Q$.\\
 \begin{assumptions}\label{ass:noisecontrol}We assume the followings.\vspace{-0.5em}
 \begin{enumerate}
         \item For the operators $A_{\max}$ and $G$ and for the spaces $H$ and $U$ Assumptions \ref{ass:AmaxG} are satisfied, and $\dim U=n<\infty$, hence $Q$ is an $n\times n$ matrix.
         \item The generator $(A,\mcD(A))$ in \eqref{eq:Agenerator}  is self-adjoint, dissipative and has compact resolvent. In particular, $A$ generates a contractive analytic semigroup $\left(\mathbb{T}(t)\right)_{t\geq 0}$ on $H$.
         \item Analogously to \eqref{eq:controlopB}, define
         \begin{equation}\label{eq:Bdefgen}
		          B\coloneqq (1-A_{-1})\Dir_1\in\mathcal{L}\left(U,H_{-1}\right).
	   \end{equation}
         Using the analiticity of the semigroup, we obtain that the operator $\mathbb{T}_{-1}(t)B$ maps $U$ into $H$ for any $t>0$. Assume that
                \begin{equation}\label{eq:TBQHS}
                    \int_0^T\left\|\mathbb{T}_{-1}(t)BQ^{\frac{1}{2}}\right\|^2_{\mathrm{HS}(U,H)}dt<+\infty,
                \end{equation}
            where $\mathrm{HS}$ denotes the space of Hilbert--Schmidt operators between the appropriate spaces.         
     \end{enumerate}
 \end{assumptions}

Assumptions \ref{ass:noisecontrol} imply that $A$ has only point spectrum with eigenvalues $(-\lambda_k)_{k\in\nat}$ that form a sequence of nonpositive real numbers satisfying 
	\begin{align*}
		0\leq\lambda_0\leq\lambda_1\leq\cdots\leq\lambda_k\leq\lambda_{k+1}\leq\cdots
	\end{align*}
	and 
	\begin{equation}\label{eq:lambdak}
		\lambda_k\to+\infty,\quad k\to\infty.
	\end{equation}
	We may then choose a complete orthonormal set $(f_k)_{k\in\nat}\subset \mcD(A)$ of eigenfunctions such that
	\begin{equation}\label{eq:fk}
		A f_k=-\lambda_k f_k,\quad k\in\nat.
	\end{equation}
 Within this context, the scale of Hilbert spaces 
 \begin{align*}
     D(A)\subset H\subset H_{-1},
 \end{align*}
introduced in Section \ref{sec:control}, can be completed to a scale $(H_\alpha)_{\alpha\in\mathbb{R}}$ as follows. 

\begin{definition}\label{defi:Halpha}
    For every $\alpha\geq0$ and for any $\la>0$ we set \[H_\alpha:=\mcD((\lambda-A)^\alpha),\] equipped with its natural norm $    \|\cdot\|_\alpha$; that is, for $u\in H$ we have $u\in H_\alpha$ if and only if
\begin{equation}\label{eq:Halphanorm}
    \|u\|^2_\alpha:=\sum_{k\in\mathbb{N}}(\lambda-\lambda_k)^{2\alpha}|\langle u,f_k\rangle|^2<\infty.
\end{equation}
For $\alpha>0$, the space $H_{-\alpha}$ is defined as the dual of $H_\alpha$ with respect to the pivot space $H$. Equivalently, $H_{-\alpha}$ is the completion of $H$ for the norm
\begin{align*}
    \|u\|_{-\alpha}^2=\sum_{k\in\mathbb{N}}(\lambda-\lambda_k)^{-2\alpha}|\langle u,f_k\rangle|^2.
\end{align*}
It is well-known that the above norms are equivalent for all $\la>0$.
\end{definition} 
 
 \begin{remark}\label{rem:BadmissibleA}
        If Assumptions \ref{ass:noisecontrol} are satisfied, it follows from  \cite[Thm 2.2]{DPZ93} and \cite[Thm.~3.1]{FHR25} that the input map $\Phi_T$, defined in \eqref{inp-map}, is a Hilbert-Schmidt operator from $L^2(0,T;U)$ into $H$. Since every Hilbert-Schmidt operator is bounded, it follows that $B$ is an admissible control operator for $A$, see also \cite[Prop. 3.4]{FHR23} .
 \end{remark}
 
By Assumptions \ref{ass:noisecontrol}, using the terminology of \cite{DPZ93}, system \eqref{eq:opAKmax1gen} can be written in an It\^o form as
	\begin{equation}\label{eq:stocauchygen}
	\begin{cases}
		dw(t)=Aw(t)dt+Bd\beta(t), &t\in(0,T],\\
		w(0)=z_0,
	\end{cases}
	\end{equation}
and the weak solution $w(t)=w(t,z_0)\in H$, $t\in[0,T]$ of \eqref{eq:stocauchygen} is given by
\begin{equation}
w(t)=\mathbb{T}(t)z_0+\int_0^t\mathbb{T}_{-1}(t-s)Bd\beta(s).
\end{equation}

We mention that, for all $t\in(0,T]$, $w(t)$ is a Gaussian random variable with mean vector $\mathbb{T}(t)z_0$ and covariance operator $Q_t\in\mathcal{L}(H)$ given by
	\begin{align*}
		Q_tx=\int_0^t\mathbb{T}_{-1}(s)BQB^*\mathbb{T}^*(s)xds, \qquad \forall x\in \mcD(A^*)=\mcD(A).
	\end{align*}

Let $\mathcal{B}_b({H})$ (resp. $C_b({H})$) be the space of Borel measurable (resp. continuous) bounded functions from ${H}$ to $\mathbb{R}$. We shall denote by $\mathcal P_t$, $t\in[0,T]$, the \emph{transition semigroup} corresponding to problem \eqref{eq:stocauchygen}:
\begin{equation}\label{eq:transsgr}
\mathcal P_t\phi(z_0):=\mathbb{E}\left(\phi(w(t,z_0))\right), \qquad t\in[0,T],\;\; \phi\in \mathcal{B}_b({H}). 
\end{equation} 
\begin{definition}    
The transition semigroup $\mathcal P_t$ is said to be a \textit{strong Feller} semigroup at time $T>0$ if
\begin{equation}\label{eq:strongFellerprop}
\text{ for any }\phi\in \mathcal{B}_b({H}),\quad \mathcal P_T\phi\in C_b({H})\text{ holds}.    
\end{equation}
\end{definition}
It is well-known that $\mathcal P_t$ is strong Feller at time $T>0$ if and only if
\begin{align*}
	\text{Ran}\; \mathbb{T}(T)\subset \text{Ran}\;Q_T^{\frac{1}{2}},
\end{align*}
see for example \cite[Thm.~7.2.1]{DPZbook96}.\\

Let  $\Phi_t$, $t\in[0,T]$, be the input maps associated with $(A,B)$, see \eqref{inp-map}. 
It follows from \cite[Cor.~B3]{DPZbook} that
\begin{align*}
	\text{Ran}\; \Phi_TQ^{\frac{1}{2}}=\text{Ran}\; Q_T^{\frac{1}{2}}.
\end{align*}
\begin{remark}\label{rem:nullcontrFeller}
Taking in consideration \eqref{null-con}, we deduce that $\mathcal P_t$ is strong Feller at time $T>0$ if and only if the problem 
	\begin{equation}\label{eq:control}
	\left\{\begin{aligned}
		\dot{z}(t)&=Az(t)+BQ^{\frac{1}{2}}u(t),\; t\in(0,T],\\
		z(0)&=z_0,
	\end{aligned}\right.
\end{equation}
	is null controllable in time $T>0$ (see Definition \ref{defi:0controllable}).
\end{remark}

 In the following, we will investigate the null controllability of the above problem to obtain a necessary and sufficient condition for the strong Feller property of the transition semigroup.\\

By Assumptions \ref{ass:noisecontrol}, for any $z_0\in H$ and any $u\in L^2(0,T;U)$ the problem \eqref{eq:control} has a unique mild solution $z\in C([0,T],H)$ given by
		\begin{equation}\label{varformula}
			z(t)=\mathbb{T}(t)z_0+\int_{0}^t\mathbb{T}_{-1}(t-s)BQ^{\frac{1}{2}}u(s)ds=\mathbb{T}(t)z_0+\Phi_tQ^{\frac{1}{2}}u.
		\end{equation}

 \begin{proposition}\label{prop:0contrformula}
     Let Assumptions \ref{ass:noisecontrol} be satisfied. Let $(f_k)_{k\in\nat}\subset \mcD(A)$ be a set of linearly independent eigenfunctions of $A$ corresponding to $(\lambda_k)_{k\in\nat}$ forming a total subset in $H$. Then the problem \eqref{eq:control} is null controllable in time $T > 0$ if and only if for any $z_0\in H$ there is a control $u\in L^2\left(0,T;U\right)$ satisfying
	\begin{equation}\label{eq:0contrprf1}
		\e^{-\lambda_k T}\langle z_0,f_k\rangle_{H}=-\sum_{j=1}^n\left(Q^{\frac{1}{2}}B^*f_k\right)_{j}\int_0^T\e^{-\lambda_k (T-s)}u_j(s)\ds \text{ for all } k\in\nat.
	\end{equation}
\end{proposition}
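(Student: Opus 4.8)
The plan is to unfold the abstract null-controllability condition \eqref{null-con} into the explicit moment-type equations \eqref{eq:0contrprf1} by testing the mild-solution formula against the eigenfunctions $f_k$. The starting point is Remark \ref{rem:nullcontrFeller}, which tells us that the strong Feller property is equivalent to the null controllability of \eqref{eq:control}, and the variation-of-constants formula \eqref{varformula}, which gives
\begin{equation*}
z(T)=\mathbb{T}(T)z_0+\Phi_T Q^{\frac{1}{2}}u.
\end{equation*}
Null controllability in time $T$ means: for every $z_0\in H$ there is a control $u\in L^2(0,T;U)$ with $z(T)=0$, i.e.\ $\mathbb{T}(T)z_0=-\Phi_T Q^{\frac{1}{2}}u$ in $H$. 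Since $(f_k)_{k\in\nat}$ is a total set in $H$, this identity in $H$ holds if and only if it holds after pairing with each $f_k$. So the first step is simply to record that $z(T)=0$ is equivalent to $\langle z(T),f_k\rangle_H=0$ for all $k\in\nat$.

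\emph{First} I would compute $\langle \mathbb{T}(T)z_0,f_k\rangle_H$. Using self-adjointness of $A$ (hence of $\mathbb{T}(t)$) from Assumptions \ref{ass:noisecontrol}(2) together with the eigenrelation $Af_k=-\lambda_k f_k$ from \eqref{eq:fk}, one gets $\mathbb{T}(T)f_k=\e^{-\lambda_k T}f_k$, so
\begin{equation*}
\langle\mathbb{T}(T)z_0,f_k\rangle_H=\langle z_0,\mathbb{T}(T)f_k\rangle_H=\e^{-\lambda_k T}\langle z_0,f_k\rangle_H,
\end{equation*}
which is exactly the left-hand side of \eqref{eq:0contrprf1}. \emph{Second}, I would compute the pairing of the input term $\Phi_T Q^{\frac{1}{2}}u$ with $f_k$. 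By definition \eqref{inp-map}, $\Phi_T Q^{\frac{1}{2}}u=\int_0^T\mathbb{T}_{-1}(T-s)BQ^{\frac{1}{2}}u(s)\ds$, an integral carried out in $H_{-1}$ but landing in $H$ by admissibility (Remark \ref{rem:BadmissibleA}). The key move is to transfer the adjoint of the extended operators onto $f_k$: since $f_k\in\mcD(A)=\mcD(A^*)$, one has $\langle\mathbb{T}_{-1}(T-s)Bv,f_k\rangle_H=\langle v,B^*\mathbb{T}^*(T-s)f_k\rangle_U$ for $v\in U$, and again $\mathbb{T}^*(T-s)f_k=\mathbb{T}(T-s)f_k=\e^{-\lambda_k(T-s)}f_k$. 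Pulling the scalar $\e^{-\lambda_k(T-s)}$ out and writing the $U$-inner product coordinatewise with $v=Q^{\frac{1}{2}}u(s)$ (so that $\langle Q^{\frac{1}{2}}u(s),B^*f_k\rangle_U=\sum_{j=1}^n (Q^{\frac{1}{2}}B^*f_k)_j\,u_j(s)$, using that $Q^{\frac{1}{2}}$ is symmetric and $\dim U=n$) and integrating in $s$ yields
\begin{equation*}
\langle\Phi_T Q^{\frac{1}{2}}u,f_k\rangle_H=\sum_{j=1}^n\left(Q^{\frac{1}{2}}B^*f_k\right)_j\int_0^T\e^{-\lambda_k(T-s)}u_j(s)\ds,
\end{equation*}
which is the right-hand side of \eqref{eq:0contrprf1}. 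Combining the two computations with $z(T)=0$ gives the stated equivalence.

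The main obstacle is \emph{justifying the interchange} of the $H$-inner product with the $H_{-1}$-valued (Bochner) integral and the duality pairing $\langle B^*\mathbb{T}^*(t)f_k,v\rangle$. Strictly, $\Phi_T$ maps into $H$ only through admissibility, and the individual integrand $\mathbb{T}_{-1}(T-s)Bv$ lives in $H$ for $s<T$ but degenerates as $s\to T$; so the identity $\langle\mathbb{T}_{-1}(t)Bv,f_k\rangle_H=\langle v,B^*\mathbb{T}^*(t)f_k\rangle_U$ must be read as the duality pairing between $H_{-1}$ and $H_1=\mcD(A)$, with $B^*\in\mathcal{L}(H_1,U)$ well-defined because $B\in\mathcal{L}(U,H_{-1})$. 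I would handle this either by invoking the standard admissibility calculus (the adjoint input map $\Phi_T^*\colon H\to L^2(0,T;U)$ acts as $(\Phi_T^* f_k)(s)=B^*\mathbb{T}^*(T-s)f_k$, cf.\ \cite{Weiss89}), so that $\langle\Phi_T Q^{\frac{1}{2}}u,f_k\rangle_H=\langle Q^{\frac{1}{2}}u,\Phi_T^* f_k\rangle_{L^2}$, or by first establishing the identity for $f_k\in\mcD(A)$ and smooth/compactly-supported-in-time controls and then passing to the limit, using the boundedness of $\Phi_T Q^{\frac{1}{2}}$ (guaranteed by \eqref{eq:TBQHS} in Assumptions \ref{ass:noisecontrol}(3)) and density. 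Everything else—the eigenvalue computation and the finite coordinate expansion—is routine once this pairing is legitimized.
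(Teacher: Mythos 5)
Your proposal is correct and follows essentially the same route as the paper's proof: testing $z(T)=\mathbb{T}(T)z_0+\Phi_TQ^{\frac{1}{2}}u$ against the total family $(f_k)$, using self-adjointness and the eigenrelation for the semigroup term, and computing the input term via the adjoint input map $(\Phi_T^*f_k)(s)=B^*\mathbb{T}^*(T-s)f_k$ followed by the coordinatewise expansion in $U$. The technical justification you flag is handled in the paper exactly by your first suggested option, namely citing the standard formula for $\Phi_T^*$ on $\mcD(A^*)$ (the paper uses \cite[Prop.~4.4.1]{TuWe09}).
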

\begin{proof}	
	By \eqref{eq:solcontrol} we know that for any $z_0\in H$ and  $u\in L^2(0,T;U)$, the mild solution $z(t)$ defined in \eqref{varformula} exists and is unique. We take any eigenfunction $f_k$ from the total subset satisfying \eqref{eq:fk}. Then
	\begin{align}
		\langle z(T),f_k\rangle_{H}&=\langle \mathbb{T}(T)z_0,f_k\rangle_{H} + \langle \Phi_TQ^{\frac{1}{2}} u,f_k\rangle_{H}\\
		&=\e^{-\lambda_k T}\langle z_0,f_k\rangle_{H} + \langle Q^{\frac{1}{2}}u, \Phi_T^* f_k \rangle_{L^2\left(0,T,U\right)}\\
		&=\e^{-\lambda_k T}\langle z_0,f_k\rangle_{H}+\int_0^T\e^{-\lambda_k (T-s)}\langle B^* f_k,Q^{\frac{1}{2}}u(s)\rangle_{U}\ds \\
		&=\e^{-\lambda_k T}\langle z_0,f_k\rangle_{H}+\int_0^T\e^{-\lambda_k (T-s)}\langle Q^{\frac{1}{2}}B^* f_k,u(s)\rangle_{U}\ds\\
		&=\e^{-\lambda_k T}\langle z_0,f_k\rangle_{H}+\sum_{j=1}^n\int_0^T\e^{-\lambda_k (T-s)}\left(Q^{\frac{1}{2}}B^*f_k\right)_{j} u_{j}(s)ds\\
		&=\e^{-\lambda_k T}\langle z_0,f_k\rangle_{H}+\sum_{j=1}^n\left(Q^{\frac{1}{2}}B^*f_k\right)_{j}\int_0^T\e^{-\lambda_k (T-s)}u_{j}(s)ds,
	\end{align}
where we have used that for every $T>0$ the adjoint $\Phi_T^*$ of the operator $\Phi_T$ is in $\mathcal{L}\left(H, L^2(0,T;U)\right)$ and is given by
 \begin{align}\label{eq:adjoint}
	\left(\Phi_T^*x\right)(t)=B^*\mathbb{T}^*(T-t)x, \qquad t\in[0,T],
\end{align}
for any $x\in \mcD(A^*)$, see \cite[Prop.~4.4.1]{TuWe09}.
 Since the closure of the linear hull of the functions $(f_k)$ is $H$, $z(T)=0$ is equivalent to the fact that $\langle z(T),f_k\rangle_{H}=0$ for each $k$. This implies that the problem is null controllable if and only if for any $z_0\in H$ there is a control $u\in L^2\left(0,T;U\right)$ such that \eqref{eq:0contrprf1} holds.
\end{proof}

\section{Strong Feller property of the solution of the network problem with Kirchhoff noise}\label{sec:strongFellernetwork}

 In this section we aim to investigate the strong Feller property of the transition semigroup of the network problem on the metric graph $\mG$ with Kirchhoff noise in the vertices, see \eqref{eq:stochnet},
 
	\begin{equation}\label{eq:stochnetKnoise}
		\left\{\begin{aligned}
			\dot{z}_{\me}(t,x) & =  (c_{\me} z_{\me}')'(t,x)-p_{\me}(x) z_{\me}(t,x), &x\in (0,1),\; t\in(0,T],\; \me\in\mE,\;\; & (a)\\
			0 & = I_{\mv}Z(t,\mv),\;   &t\in(0,T],\; \mv\in\mV,\;\;& (b)\\
			\dot{\beta}_{\mv}(t)& = C(\mv)^{\top}Z'(t,\mv), & t\in(0,T],\; \mv\in\mV,\;\;& (c)\\
			z_{\me}(0,x) & =  z_{0,\me}(x), &x\in [0,\ell_{\me}],\; \me\in\mE,\;\;& (d)
		\end{aligned}
		\right.
	\end{equation}
	where the process 
	\[(\beta(t))_{t\in [0,T]}=\left(\left(\beta_{\mv}(t)\right)_{t\in [0,T]}\right)_{\mv\in\mV},\] 
	is an $\real^{n}$-valued Brownian motion (Wiener process) with $|\mV|=n$ and covariance matrix 
	\begin{equation}\label{eq:kovmtx}
	   Q\in \real^{n\times n},
	\end{equation}
    and
\[\begin{split}
 0&< c_0\leq c_{\me}\in \mathrm{Lip}[0,\ell_{\me}],\quad\text{for some constant }c_0,\\
 0&\leq p_{\me}\in L^{\infty}(0,\ell_{\me}),\quad \me\in\mE.
\end{split}\]

We start with the deterministic problem on the network:
	\begin{equation}\label{eq:netcp}
		\left\{\begin{aligned}
			\dot{z}_{\me}(t,x) & =  (c_{\me} z_{\me}')'(t,x)-p_{\me}(x) z_{\me}(t,x), &x\in (0,\ell_{\me}),\;t\in(0,T],\; \me\in\mE,\;\; & (a)\\
			0 & = I_{\mv}Z(t,\mv),\;   &t\in(0,T],\; \mv\in\mV,\;\;& (b)\\
			0 & =  C(\mv)^{\top}Z'(t,\mv), & t\in(0,T],\; \mv\in\mV,\;\;& (c)\\
			z_{\me}(0,x) & =  z_{0,\me}(x), &x\in [0,\ell_{\me}],\;\; \me\in\mE.\;\;& (d)
		\end{aligned}
		\right.
	\end{equation}
	where $0$ denotes the constant $0$ vector of dimension $d_{\mv}-1$ on the left-hand-side of $(\ref{eq:netcp}b)$. Recall that equations $(\ref{eq:netcp}b)$ assume continuity of the function $z(t,\cdot)$, cf.~\eqref{eq:contv}, and equations $(\ref{eq:netcp}c)$ are called \emph{Kirchhoff conditions} which become standard Neumann boundary conditions in vertices of degree $1$.
 
 We show that this system can be written in the form of \eqref{eq:boundconpro} with $u(t)=0$. First we consider the Hilbert space
	\begin{equation}\label{eq:E2}
		\mcH\coloneqq \prod_{\me\in\mE}L^2\left(0,\ell_{\me}\right)
	\end{equation}
	as the \emph{state space} of the edges, endowed with the natural inner product
	\[\langle f,g\rangle_{\mcH}\coloneqq \sum_{\me\in\mE} \int_0^{\ell_{\me}} f_{\me}(x)g_{\me}(x) dx,\qquad
	f=\left(f_{\me}\right)_{\me\in\mE},\;g=\left(g_{\me}\right)_{\me\in\mE}\in \mcH.\]
	On $\mcH$ we define the operator
	\begin{equation}\label{eq:opAmax}
		A_{\max}\coloneqq \diag\left(\frac{d}{dx}\left(c_{\me} \frac{d}{dx}\right)-p_{\me} \right)_{\me\in\mE}
	\end{equation}
	with maximal domain
	\begin{equation}\label{eq:domAmax}
		\mcD(A_{\max}) =\left\{z\in\mcH^2\colon I_{\mv}Z(\mv)=0,\; \mv\in\mV \right\},
	\end{equation}
	where
	\[\mcH^2\coloneqq \prod_{\me\in\mE} H^2(0,\ell_{\me})\]
	with $H^2(0,\ell_{\me})$ being the Sobolev space of twice weakly differentiable $L^2(0,\ell_{\me})$-functions having their first and second weak derivatives in $L^2(0,\ell_{\me})$.
	We also introduce the \emph{boundary space} 
	\begin{equation}\label{eq:Y}
		\mcY\coloneqq \ell^2(\real^{n})\cong \real^{n}.
	\end{equation}

	Notice that for fixed $z\in \mcD(A_{\max})$, the boundary (or vertex) conditions can be written as
	\begin{equation}\label{eq:contKir}
		0_{\real^{d_{\mv}-1}}=I_{\mv}Z(\mv),\; 0=C(\mv)^{\top}Z'(\mv),\quad \mv\in\mV,
	\end{equation}
	cf.~$(\ref{eq:netcp}b)$ and $(\ref{eq:netcp}c)$.
	
	\begin{remark}
		Define $A_{\mv}$ as the square matrix that arises from $I_{\mv}$ by inserting an additional row containing only $0$'s; that is,
		\begin{equation}\label{eq:Av}
			A_{\mv}=\begin{pmatrix}
				1 & -1 & & \\
				& \ddots & \ddots & \\
				& & 1 & -1 \\
				0 & \hdots & 0 & 0
			\end{pmatrix}\in\real^{d_{\mv}\times d_{\mv}}.
		\end{equation}
		Furthermore, let $B_{\mv}$ be the square matrix defined by
		\begin{equation}\label{eq:Bv}
			B_{\mv}=\begin{pmatrix}
				0 & \hdots &  0\\
				\vdots   &  & \vdots\\
				0 & \hdots &  0\\
				c_{\me_1}(\mv) & \hdots &  c_{\me_{d_{\mv}}}(\mv),
			\end{pmatrix}\in\real^{d_{\mv}\times d_{\mv}}.
		\end{equation} 
		It is straightforward that for a fixed $\mv\in\mV$, equations \eqref{eq:contKir} have the form
		\begin{equation}\label{eq:AvBv}
			A_{\mv}Z(\mv)+B_{\mv}Z'(\mv)=0_{\real^{d_{\mv}}},
		\end{equation}
		where the $d_{\mv}\times (2d_{\mv})$ matrix $\left(A_{\mv},B_{\mv}\right)$ has maximal rank. Thus, our vertex conditions have the form as in \cite[Sec.~1.4.1]{BeKu}.
	\end{remark}

	We now define the boundary operator $G\colon \mcD(A_{\max})\to \mcY$ by
	\begin{equation}\label{eq:opG}
		\begin{split}
			Gz\coloneqq \left(C(\mv)^{\top}Z'(\mv)\right)_{\mv\in\mV}.
		\end{split}
	\end{equation}

	With these notations, we can finally rewrite \eqref{eq:netcp} in form of 
 \begin{align}\label{eq:opAKmaxdeterm}
		\begin{cases}
			\dot{z}(t)=A_{\max}z(t), &t\in(0,T]\\
			Gz(t)=0, &t\in(0,T]\\
			z(0)=z_0,
		\end{cases}
	\end{align}
 cf.~\eqref{eq:boundconpro}. Define 
 \begin{equation}\label{eq:amain}
     A\coloneqq A_{\max}|_{\mcD(A)},\; \mcD(A)=\ker G
 \end{equation} 
 see Assumptions \ref{ass:AmaxG}. We claim now that we are in the situation of Section \ref{sec:control} and \ref{sec:strongFeller}.

\begin{proposition}\label{prop:assumptionfornet}
    For the spaces $H=\mcH$ and $U=\mcY$ defined in \eqref{eq:E2}, \eqref{eq:Y} and for the operators $A_{\max}$ defined in \eqref{eq:opAmax}, \eqref{eq:domAmax} and $G$ defined in \eqref{eq:opG} Assumptions \ref{ass:noisecontrol} are satisfied.
\end{proposition}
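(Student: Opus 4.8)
The plan is to verify the three items of Assumptions \ref{ass:noisecontrol} in turn, for $H=\mcH$ and $U=\mcY$, and the operators $A_{\max}$, $G$, $A$ from \eqref{eq:opAmax}--\eqref{eq:amain}. For item~(1), the identification $\dim U=\dim\mcY=n<\infty$ (so that $Q$ is an $n\times n$ matrix) is immediate from \eqref{eq:Y}. That $A_{\max}$ is closed with $\mcD(A_{\max})\hookrightarrow\mcH$ continuous and dense follows by equipping $\mcD(A_{\max})\subset\prod_{\me\in\mE}H^2(0,\ell_\me)$ with its graph norm and recalling that smooth functions compactly supported in the open edges are already dense in $\mcH$. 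For the surjectivity of $G$ (A1): given a target $(\eta_\mv)_{\mv\in\mV}\in\real^n$, I would build $z$ edge by edge, choosing on each edge a cubic polynomial with all endpoint \emph{values} equal to zero (so continuity $(\ref{eq:stochnetKnoise}b)$ holds trivially) and with prescribed outgoing derivatives; since the endpoint derivatives of distinct edges are independent degrees of freedom and every $c_\me(\mv)>0$, one can arrange $C(\mv)^\top Z'(\mv)=\eta_\mv$ at each $\mv$. The generation statement (A2) is subsumed in item~(2).

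For item~(2) I would realise $A$ through the symmetric sesquilinear form
\[\mathfrak a(u,v)=\sum_{\me\in\mE}\int_0^{\ell_\me}\bigl(c_\me u_\me'\,\overline{v_\me'}+p_\me u_\me\,\overline{v_\me}\bigr)\dx,\]
with form domain $V=\{u\in\prod_{\me\in\mE}H^1(0,\ell_\me)\colon u\text{ continuous on }\mG\}$. Since $c_\me\geq c_0>0$ and $p_\me\geq 0$, the form $\mathfrak a$ is densely defined, symmetric, nonnegative and closed, hence associated with a unique nonnegative self-adjoint operator; an integration by parts shows that this operator coincides with $A$ of \eqref{eq:amain}, the Kirchhoff conditions $(\ref{eq:stochnetKnoise}c)$ arising as the natural boundary conditions. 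Dissipativity is then $\langle Au,u\rangle=-\mathfrak a(u,u)\leq0$, and the compactness of the embedding $V\hookrightarrow\mcH$ (Rellich on each edge, $H^1(0,\ell_\me)\hookrightarrow L^2(0,\ell_\me)$ being compact) gives the compact resolvent, the discrete spectrum $(-\lambda_k)$ and the eigenbasis $(f_k)$. Finally, a self-adjoint operator bounded above by $0$ generates a contractive analytic semigroup, which is the last assertion of item~(2).

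The delicate point is the Hilbert--Schmidt condition \eqref{eq:TBQHS} of item~(3). Since $\HS$ is a two-sided ideal and $Q^{1/2}$ is a bounded $n\times n$ matrix, one has $\|\mathbb{T}_{-1}(t)BQ^{1/2}\|_{\HS}\leq\|Q^{1/2}\|\,\|\mathbb{T}_{-1}(t)B\|_{\HS}$, so it suffices to treat $Q=I$. Expanding in the eigenbasis gives $\|\mathbb{T}_{-1}(t)Be_\mv\|^2=\sum_k\e^{-2\lambda_k t}|(B^*f_k)_\mv|^2$, and together with $\int_0^T\e^{-2\lambda_k t}\dt\leq C(1+\lambda_k)^{-1}$ the condition \eqref{eq:TBQHS} reduces to
\[\sum_{\mv\in\mV}\sum_{k\in\nat}\frac{|(B^*f_k)_\mv|^2}{1+\lambda_k}<\infty.\]
The first task here is to identify $B^*$: writing $\langle Bu,f_k\rangle=\langle\Dir_1u,(1-A)f_k\rangle_{\mcH}$ (recall $B=(1-A_{-1})\Dir_1$), using that $\Dir_1u\in\ker(1-A_{\max})$, that $f_k$ satisfies the Kirchhoff condition, and the continuity of both functions, Green's formula collapses all boundary terms to the vertex values, yielding $B^*f_k=\pm(f_k(\mv))_{\mv\in\mV}$, i.e. $B^*$ is (up to sign) the vertex-trace operator and the sum becomes $\sum_{\mv}\sum_k|f_k(\mv)|^2(1+\lambda_k)^{-1}$.

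The main obstacle is the convergence of this last series, because the naive pointwise bound $|f_k(\mv)|^2\lesssim\|f_k\|_\infty^2\lesssim\|f_k\|_{\mcH}\|f_k\|_V\lesssim(1+\lambda_k)^{1/2}$ is too weak: it only gives $\sum_k(1+\lambda_k)^{-1/2}$, which diverges by the Weyl growth $\lambda_k\sim ck^2$. I would instead use the averaged (rather than pointwise) behaviour of the vertex values. The quantity $\sum_k|f_k(\mv)|^2(1+\lambda_k)^{-1}$ is exactly the on-diagonal value at $\mv$ of the integral kernel of the resolvent $(1-A)^{-1}$; for this one-dimensional second-order operator the resolvent is an integral operator with a jointly continuous Green kernel $G_1(\cdot,\cdot)$, and by Mercer's theorem the series converges absolutely to the finite number $G_1(\mv,\mv)$. (Equivalently, this is the statement that $B\in\mathcal{L}(U,H_{-\gamma})$ for every $\gamma>\tfrac14$, which may alternatively be invoked from \cite[Thm.~3.1]{FHR25}.) Summing over the finitely many $\mv\in\mV$ establishes \eqref{eq:TBQHS} and completes the verification of Assumptions \ref{ass:noisecontrol}.
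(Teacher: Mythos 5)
Your proposal is correct, but it takes a genuinely different route from the paper. The paper's own proof is essentially a list of citations: surjectivity of $G$ is quoted from \cite[Prop.~A.1]{KS23}, items (1)--(2) of Assumptions \ref{ass:noisecontrol} from \cite[Prop.~2.3]{KS23} and \cite[Prop.~3.6]{KS23}, and the Hilbert--Schmidt condition (3) from \cite[Thm.~3.3]{KS23}, cf.\ Proposition \ref{prop:solH14}, which in turn rests on the bound of \cite[Thm.~4.1]{BKKS23} that $(1-A)^{\frac12+\alpha+\ve}\Dir_1$ is bounded for $\alpha<\tfrac14$. You instead argue from scratch: an explicit cubic-interpolation construction for (A1); the form method (closed nonnegative symmetric form, Kato's representation theorem, Rellich compactness) for item (2); and, for item (3), the reduction to the convergence of $\sum_{\mv\in\mV}\sum_{k}|f_k(\mv)|^2(1+\lambda_k)^{-1}$, where your Green's-formula identification $B^*=\pm L$ is exactly the content of the paper's Proposition \ref{prop:Bstar}, followed by a Mercer/Green-kernel argument identifying the series with the diagonal value $G_1(\mv,\mv)$ of the resolvent kernel. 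All of these steps are sound, and you correctly isolate the real difficulty: the naive bound $|f_k(\mv)|^2\lesssim(1+\lambda_k)^{1/2}$ is insufficient, so an averaged ($\ell^2$ in $k$) bound on vertex values is needed. What each approach buys: yours is self-contained and elementary, while the paper's citation to \cite{BKKS23} yields the strictly stronger statement $B\in\mathcal{L}(\mcY,\mcH_{\alpha+\ve-\frac12})$ for all $\alpha<\tfrac14$, which is what powers the $\mcH_\alpha$-regularity of Proposition \ref{prop:solH14}; your argument gives only the case needed for \eqref{eq:TBQHS}, which indeed suffices here. Two refinements you should note. First, Mercer's theorem must be invoked in its version for compact metric measure spaces (a compact metric graph), together with joint continuity of the Green kernel; alternatively, you can bypass Mercer entirely by observing that point evaluation $\delta_\mv$ is a bounded functional on $\mcH_{\frac12}$ (one-dimensional Sobolev embedding and \eqref{eq:H12def}), hence $\delta_\mv\in\mcH_{-\frac12}$ and $\|\delta_\mv\|_{-\frac12}^2=\sum_k|f_k(\mv)|^2(1+\lambda_k)^{-1}<\infty$, which is precisely your series. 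Second, your parenthetical ``equivalently'' is an overstatement: $B\in\mathcal{L}(U,H_{-\gamma})$ for every $\gamma>\tfrac14$ is strictly stronger than the $\gamma=\tfrac12$ statement your Mercer argument delivers, so the two are not equivalent, although either implies \eqref{eq:TBQHS}.
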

\begin{proof}
    The surjectivity of $G$ follows by \cite[Prop.~A.1]{KS23}. Assumptions \ref{ass:noisecontrol}(1) and (2) follow then by \cite[Prop.~2.3]{KS23} and \cite[Prop.~3.6]{KS23}. Assumption (3) is a consequence of \cite[Thm.~3.3]{KS23}, c.f. Proposition \ref{prop:solH14}.
\end{proof}

 The above proposition implies that the bounded operators
    \begin{equation}\label{eq:Dirop}
	    \mathbb{D}_\lambda:=\left(G|_{\ker(\lambda-A_{\max})}\right)^{-1}: \mathcal{Y}\to \ker(\lambda-A_{\max})\subset \mcH, \; \lambda\in(0,+\infty)
	\end{equation}
    and
    \begin{equation}\label{eq:Bdef}
		B\coloneqq (1-A_{-1})\Dir_1\in\mathcal{L}\left(\mathcal{Y},\mathcal{H}_{-1}\right),
	\end{equation}
 can be defined, and by Remark \ref{rem:BadmissibleA}, $B$ is an admissible control operator for $A$, where $A$ is the operator in \eqref{eq:amain}.\\

It is shown in \cite[(3.11)]{KS23} that for the space $\mcH_{\frac12}$, see Definition \ref{defi:Halpha}, we have
\begin{equation}\label{eq:H12def}
    \mcH_{\frac12}\cong\left\{z\in \mcH^1\colon I_{\mv}Z(\mv)=0,\; \mv\in\mV\right\},    
\end{equation}
where
\begin{equation}
\mcH^1\coloneqq \prod_{\me\in\mE}H^1(0,\ell_{\me}).
\end{equation}
Here $H^1(0,\ell_{\me})$ is the standard Sobolev space of absolutely continuous functions on the edge $\me$ having square integrable weak derivatives. That is, $\mcH_{\frac12}$ is isomorphic to the space of absolutely continuous functions on the edges satisfying the continuity condition in the vertices.

We now introduce the boundary operator $L\colon \mcD(L)\to \mcY$ defined by
		\begin{equation}\label{eq:L}
			\begin{split}
				\mcD(L) & = \mcH_{\frac12},\\
				L z & \coloneqq\left(z(\mv)\right)_{\mv\in\mV}\in \mathcal{Y},
			\end{split}
		\end{equation}
		where $z(\mv)$ denotes the common vertex value of the function $z\in \mcD(L)$ on the edges incident to $\mv$. That is, $L$ assigns to each function $z$ that is continuous on $\mG$ the vector of the vertex values of $z$. Observe that $\mcD(A_{\max})\subset \mcD(L)$ holds. In the following claim, the adjoint of $B$ which is a bounded operator from $\mcD(A^*)=\mcD(A)\cong \mcH_1$ to $\mcY$, is identified with its extension to $\mcH_{\frac12}$, where we use that $\mcH_1\subset\mcH_{\frac12}$ with an embedding $\mcH_1\to \mcH_{\frac12}$ being a continuous linear map with dense range.
		
	\begin{proposition}\label{prop:Bstar}
		For the adjoint of the operator $B$ defined in \eqref{eq:Bdef}
		\begin{equation}\label{Bstar}
		    B^*   =-L\text{ on }\mcH_{\frac12}.
		\end{equation}
	\end{proposition}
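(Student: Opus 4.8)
The plan is to compute the pairing $\langle Bu,x\rangle$ for an arbitrary $u\in\mcY$ and $x$ in the domain $\mcD(A)=\mcD(A^*)\cong\mcH_1$, to identify the outcome with $\langle u,B^*x\rangle_{\mcY}$ via the defining relation of the adjoint, and then to extend the resulting identity from $\mcH_1$ to $\mcH_{\frac12}$ by density. The two computational ingredients are the extrapolation-space duality between $\mcH_{-1}$ and $\mcH_1$, which lets me transfer $A_{-1}$ onto the self-adjoint $A$, and a Green (Lagrange) identity for $A_{\max}$, which converts everything into boundary terms at the vertices.

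First I would unwind the definition \eqref{eq:Bdef} of $B$. Setting $y\coloneqq\Dir_1 u$, we have $y\in\ker(1-A_{\max})$, hence $A_{\max}y=y$, and $Gy=u$ by \eqref{eq:Dirop}, while $Bu=(1-A_{-1})y$. For $x\in\mcD(A^*)=\mcD(A)$ the $\mcH_{-1}$–$\mcH_1$ duality extends the inner product of $\mcH$ and satisfies $\langle A_{-1}y,x\rangle=\langle y,A^*x\rangle_{\mcH}$; since $A$ is self-adjoint and $Ax=A_{\max}x$, and using $\langle y,x\rangle_{\mcH}=\langle A_{\max}y,x\rangle_{\mcH}$, this gives
\[
\langle Bu,x\rangle=\langle y,(1-A)x\rangle_{\mcH}=\langle A_{\max}y,x\rangle_{\mcH}-\langle y,A_{\max}x\rangle_{\mcH}.
\]

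The crux is then the Green identity for $A_{\max}$. Both $y$ and $x$ lie in $\mcD(A_{\max})\subset\mcH^2$, so integrating by parts twice on each edge (the potential terms $p_{\me}$ cancel) leaves only boundary contributions; with the convention that derivatives point into the edges, these assemble at each vertex into $c_{\me}(\mv)$-weighted combinations of the vertex values and inward derivatives of $y$ and $x$. Because every element of $\mcD(A_{\max})$ is continuous on $\mG$ (cf.\ \eqref{eq:contv}), both $y$ and $x$ have single-valued vertex values $y(\mv)$, $x(\mv)$, and the boundary form collapses to
\[
\langle A_{\max}y,x\rangle_{\mcH}-\langle y,A_{\max}x\rangle_{\mcH}
=\sum_{\mv\in\mV}\Big(y(\mv)\,C(\mv)^{\top}X'(\mv)-x(\mv)\,C(\mv)^{\top}Y'(\mv)\Big).
\]
Since $Gx=0$ and $Gy=u$ by \eqref{eq:opG}, the first summand vanishes and the second equals $-\langle u,Lx\rangle_{\mcY}$ by \eqref{eq:L}. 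Comparing with $\langle u,B^*x\rangle_{\mcY}$ yields $B^*x=-Lx$ for every $x\in\mcD(A)$.

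Finally I would pass from $\mcH_1$ to $\mcH_{\frac12}$: the trace map $L$ is bounded on $\mcH_{\frac12}$ because vertex evaluation is continuous on each $H^1(0,\ell_{\me})$, and $\mcH_1$ is dense in $\mcH_{\frac12}$; thus the identity $B^*=-L$ on $\mcH_1$ shows that $B^*$ is bounded for the $\mcH_{\frac12}$-norm and that its continuous extension is exactly $-L$ on all of $\mcH_{\frac12}$. The step I expect to demand the most care is the boundary bookkeeping in the Green identity: tracking the ``into the edge'' sign convention at the two endpoints of each edge so that the incidence sums reorganize correctly into the $C(\mv)^{\top}$ pairings (and hence into $G$ and $L$), together with the clean justification of the $\mcH_{-1}$–$\mcH_1$ duality relation used to move $A_{-1}$ onto the self-adjoint $A$.
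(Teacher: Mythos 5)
Your proposal is correct and follows essentially the same route as the paper's proof: compute $\langle Bu,x\rangle$ for $x\in\mcD(A)$ via the $\mcH_{-1}$--$\mcD(A)$ duality and self-adjointness of $A$, exploit $\Dir_1u\in\ker(1-A_{\max})$ to reduce everything by integration by parts to vertex boundary terms, kill one term with $Gx=0$ and identify the other with $-\langle u,Lx\rangle_{\mcY}$ via $Gy=u$, then extend from $\mcH_1$ to $\mcH_{\frac12}$ by density. The only cosmetic difference is that you invoke the symmetric Lagrange identity for $A_{\max}$ (two integrations by parts), whereas the paper passes through the quadratic-form representation of $A$ from \cite{KS23} and integrates by parts once; the boundary bookkeeping and conclusion are identical.
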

	\begin{proof}
		Let $\alpha\in \mathcal{Y}$ and $h\in \mcD(A^*)=\mcD(A).$ Using the fact that $A$ is self-adjoint and \cite[Prop.~2.3]{KS23}, we obtain
		\begin{align}
			&\langle B\alpha, h \rangle_{\mcH_{-1},\mcD(A)}=\langle (1-A_{-1})\Dir_1 \alpha, h \rangle_{\mcH_{-1},\mcD(A)}=\langle \Dir_1\alpha, (1-A^*)h \rangle_{\mcH}\notag\\
            &=\langle \Dir_1 \alpha, (1-A)h \rangle_{\mcH}\notag\\
			&=\langle \Dir_1 \alpha, h \rangle_{\mcH}+\langle \Dir_1 \alpha, (-A)h \rangle_{\mcH}\notag\\
			&=\sum_{\me\in\mE} \int_0^{\ell_{\me}} (\Dir_1 \alpha)_{\me}(x)\cdot h_{\me}(x)\dx+\sum_{\me\in\mE} \int_0^{\ell_{\me}} c_{\me}(x)\cdot (\Dir_1 \alpha)'_{\me}(x)\cdot h'_{\me}(x)\dx \label{eq:admobsbiz1}\\
			&+\sum_{\me\in\mE} \int_0^{\ell_{\me}} p_{\me}(x)\cdot (\Dir_1 \alpha)_{\me}(x)\cdot h_{\me}(x)\dx.\notag
		\end{align}
		
		We now integrate by parts in the second term of \eqref{eq:admobsbiz1} and obtain
		\begin{align*}
			\sum_{\me\in\mE} \int_0^{\ell_{\me}} c_{\me}(x)\cdot (\Dir_1 \alpha)'_{\me}(x)\cdot h'_{\me}(x)\dx&=\sum_{\me\in\mE} \left[ c_{\me}(x)\cdot (\Dir_1 \alpha)'_{\me}(x)\cdot h_{\me}(x)\right]_0^{\ell_{\me}}\\
			&-\sum_{\me\in\mE} \int_0^{\ell_{\me}} \left(c_{\me}(x)\cdot (\Dir_1 \alpha)'\right)'_{\me}(x)\cdot h_{\me}(x)\dx.
		\end{align*}
		Using $\Dir_1 \alpha\in\Ker(1-A_{\max})$, we have that
		\[(\Dir_1 \alpha)_{\me}-\left(c_{\me}\cdot (\Dir_1 \alpha)'\right)'_{\me}+p_{\me}\cdot (\Dir_1 \alpha)_{\me}=0\text{ for all }\me\in\mE,\]
		thus
		\begin{align*}
			\langle B\alpha, h \rangle_{\mcH}&=\sum_{\me\in\mE} \left[ c_{\me}(x)\cdot (\Dir_1 \alpha)'_{\me}(x)\cdot h_{\me}(x)\right]_0^{\ell_{\me}}.
		\end{align*}

		Since $h$ is continuous on $\mG$, we obtain
		\begin{align*}
			\sum_{\me\in\mE} \left[ c_{\me}(x)\cdot (\Dir_1 \alpha)'_{\me}(x)\cdot h_{\me}(x)\right]_0^{\ell_{\me}}&=-\sum_{\mv\in\mV}\sum_{\me\in\mE_{\mv}}c_{\me}(\mv)\cdot (\Dir_1 \alpha)'_{\me}(\mv)\cdot h_{\me}(\mv)\\
			&=-\sum_{\mv\in\mV}h(\mv)\sum_{\me\in\mE_{\mv}}c_{\me}(\mv)\cdot (\Dir_1 \alpha)'_{\me}(\mv)\\
			& =-\sum_{\mv\in\mV}h(\mv)\cdot\left(G\Dir_1 \alpha\right)_{\mv}\\
			&=-\sum_{\mv\in\mV}h(\mv)\cdot \left(\alpha\right)_{\mv}=\langle \alpha ,- L  h\rangle_{\mathcal{Y}},
		\end{align*}
		where we have used that, by definition, $G\Dir_1 \alpha= \alpha$. Combining all the above facts, we have that
		\begin{equation}
			\langle B\alpha, h \rangle_{\mcH_{-1},\mcD(A)}=\langle \alpha ,-L  h\rangle_{\mathcal{Y}}.
		\end{equation}
		Thus $B^*=- L$ holds on $\mcD(A)$. Using the density of $\mcD(A)\cong \mcH_1$ in $\mcH_{\frac12}$ we obtain
		\begin{equation}
			B^*   =-L\text{ on }\mcH_{\frac12}.
		\end{equation}
	\end{proof}

By the previous consideration for the deterministic version of the problem, system \eqref{eq:stochnetKnoise} can be reformulated in $\mathcal{H}$ as follows:
		\begin{equation}\label{eq:opAKmax1}
		\begin{cases}
			\dot{w}(t)=A_{\max}w(t), &t\in(0,T]\\
			Gw(t)=\dot\beta(t), &t\in(0,T],\\
			w(0)=z_0,&
		\end{cases}	
		\end{equation}
cf.~\eqref{eq:opAKmax1gen}. Hence, as seen in Section \ref{sec:strongFeller}, the system \eqref{eq:opAKmax1} can be written in an It\^o form as
	\begin{equation}\label{eq:stocauchy}
	\begin{cases}
		dw(t)=Aw(t)dt+Bd\beta(t), &t\in(0,T],\\
		w(0)=z_0,
	\end{cases}
	\end{equation}
    see \eqref{eq:stocauchygen}. On the other hand, it follows from \cite[Thm.~3.5]{KS23} that for any $z_0\in \mathcal{H}$, problem \eqref{eq:stocauchy} has a unique mild solution $w(t)=w(t,z_0)\in \mathcal{H}$, $t\in[0,T]$, given by
\begin{equation}\label{eq:ZKt}
    w(t)=\mathbb{T}(t)z_0+\int_0^t\mathbb{T}_{-1}(t-s)Bd\beta(s),
\end{equation}
where $\left(\mathbb{T}(t)\right)_{t\geq 0}$ is the contractive analytic semigroup on $\mcH$ generated by $A$, see Proposition \ref{prop:assumptionfornet}.
We define the transition semigroup of the solutions \eqref{eq:ZKt} as in \eqref{eq:transsgr}, that is, 
\begin{equation}\label{eq:transsgrnet}
\mathcal P_t\phi(z_0):=\mathbb{E}\left(\phi(w(t,z_0))\right), \qquad t\in[0,T],\;\; \phi\in \mathcal{B}_b(\mathcal{H}). 
\end{equation} 

In the same way as in Section \ref{sec:strongFeller} in Remark \ref{rem:nullcontrFeller}, one observes that $(\mathcal P_t)_{t\in[0,T]}$ is strong Feller at time $T>0$ if and only if the problem
	\begin{equation}\label{eq:netcontrol}
	\left\{\begin{aligned}
		\dot{z}(t)&=Az(t)+BQ^{\frac{1}{2}}u(t),\; t\in(0,T],\\
		z(0)&=z_0,
	\end{aligned}\right.
\end{equation}
	is null controllable at time $T>0$, see \eqref{eq:control}, where the operators are defined in \eqref{eq:amain}, \eqref{eq:Bdef} and \eqref{eq:kovmtx}.\\

From now on we assume that the diffusion coefficients in \eqref{eq:stochnetKnoise} are $c_{\me}=1$, $\me\in\mE$. In the following we give a sufficient condition for the null controllability of \eqref{eq:netcontrol}, or, equivalently, the strong Feller property of the transition semigroup defined in \eqref{eq:transsgrnet} to hold. By Proposition \ref{prop:assumptionfornet} there exists an increasing sequence of non-negative real numbers
\begin{equation}\label{eq:eigenvaluesofA}		  
0\leq \lambda_0\leq\lambda_1\leq\cdots\leq\lambda_k\leq\lambda_{k+1}\leq\cdots,\la_k\to +\infty,
\end{equation}
and a set $(f_k)_{k\in\nat}\subset \mcD(A)$ of eigenfunctions such that
	\begin{equation}\label{eq:fknet}
		Af_k=-\lambda_k f_k,\quad k\in\nat,
	\end{equation}
	where the functions $(f_k)_{k\in\nat}$ form a complete orthornomal system in $\mcH$. By \cite[(2.2) and Thm.~4.3]{BKS18} we have that there exist constants $l_1,\, l_2>0$ such that
\begin{equation}\label{eq:lambdaknet}
l_1\cdot k^2\leq -\lambda_k\leq l_2\cdot k^2,\quad k\in\nat.
\end{equation}
If the (algebraic) multiplicity of an eigenvalue is $d$, then for some $k$, $\lambda_k=\lambda_{k+1}=\cdots =\lambda_{k+d-1}$ holds (where $d=1$ can happen). Therefore, we also consider the strictly increasing subsequence of $(\la_k)$ consisting of distinct eigenvalues of $A$,  denoted by $(\mu_k)$; that is, the sequence
	\begin{equation}\label{eq:muks}
	0\leq \mu_{0}<\mu_{1}<\cdots <\mu_{k}<\cdots,\quad k\in\nat,
	\end{equation}
	where $-\mu_k$, $k\in\nat$ are the distinct eigenvalues of $A$. Since the multiplicity of eigenvalues are uniformly bounded (see, for example, \cite{BKS18}) it follows from \eqref{eq:lambdaknet} that for some constants $\tilde{l}_1,\,\tilde{l}_2>0$,
\begin{equation}\label{eq:mukasymp}
\tilde{l}_1\cdot k^2\leq -\mu_{k}\leq \tilde{l}_2\cdot {k}^2,\quad k\in\nat
\end{equation}
holds. 
 It is well-known that $\mu_0=0$ and the corresponding eigenspace is $1$ dimensional and is spanned by the constant $1$ function on $\mG$.	\\

We are going to prove  the strong Feller property of the transition semigroup $\mathcal{P}_t$ defined in \eqref{eq:transsgrnet}, that is, the null-controllability of problem \eqref{eq:netcontrol} in the case $\mG$ is a tree and the control acts in all boundary vertices, except for, possibly, one of them. 
\begin{definition}\label{defi:treeGamma}
   In a graph $\mG$ we call a vertex of degree $1$ a \emph{boundary vertex}, and denote the set of boundary vertices by $\Gamma.$ A \emph{path} in a graph is an alternating sequence of vertices and edges $(\mv_n, \me_n, \mv_{n-1}, \me_{n-1},\dots , \mv_1, \me_1, \mv_0)$ with no repeated edges or vertices.  $\mG$ is a \emph{tree} if it is connected and contains no cycle, or, equivalently, if for any two different vertices $\mv$ and  $\mv'$, there is a unique path $P(\mv,\mv')$ in $\mG$ with starting point $\mv$ and finishing point $\mv'$.
\end{definition}

For the proof we will need the following notions which we cite from \cite[Def.~I.1.13, I.1.16.]{AvdIvbook}.
\begin{definition}
    A system $E=(\xi_k)_{k\in\nat}$ is \emph{minimal}, if for any $j$, element $\xi_j$ does not belong to the closure of the linear hull of all the remaining elements.\newline  
    A system $E=(\xi_k)_{k\in\nat}$ is said to be an $\mathcal{L}$-basis, if $E$ is an image of an isomorphic mapping of some orthonormal family.
\end{definition}

   It is straightforward that by Proposition \ref{prop:boundcontequiv}, system \eqref{eq:netcontrol} can be written in the following equivalent form:
\begin{equation}\label{eq:heat}
 		\left\{\begin{aligned}
 			\dot{z}_{\me}(t,x) & = z_{\me}''(t,x)-p_{\me}(x) z_{\me}(t,x), &x\in (0,\ell_e),\; t\in(0,T],\; \me\in\mE,\;\; \\
 			0 & = I_{\mv}Z(t,\mv),\;   &t\in(0,T],\; \mv\in\mV,\;\;\\
 			Q^{\frac{1}{2}}u_\mv(t)& = \sum_{e\in E_\mv}z'_e(t,\mv), & t\in(0,T],\; \mv\in\mV,\;\;\\
 			z_{\me}(0,x) & =  z_{0,\me}(x),  &x\in [0,\ell_{\me}],\; \me\in\mE.\;\;
 		\end{aligned}
 		\right.
 	\end{equation}

\begin{theorem}\label{thm:main}
    Let $\mG$ be a tree, $c_{\me}=1$. If the covariance matrix $Q=\diag(q_{\mv})_{\mv\in\mV}$ in  \eqref{eq:kovmtx} is diagonal, and $q_{\mv}\neq 0$ for all boundary vertices except for, possibly, one of them, where it may be zero, then  the transition semigroup $\mathcal{P}_t$ defined in \eqref{eq:transsgrnet} is strong Feller at any time $T>0$.
\end{theorem}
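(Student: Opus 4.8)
The plan is to exploit the equivalence, recorded just before \eqref{eq:netcontrol}, between the strong Feller property of $\mathcal P_t$ at time $T$ and the null controllability of \eqref{eq:netcontrol}, and then to characterize the latter through the moment formulation of Proposition \ref{prop:0contrformula}. Substituting $B^\ast=-L$ from Proposition \ref{prop:Bstar} together with $Q^{\frac12}=\diag(\sqrt{q_\mv})_{\mv\in\mV}$, the two minus signs cancel and condition \eqref{eq:0contrprf1} becomes
\begin{equation}
\e^{-\lambda_k T}\langle z_0,f_k\rangle_{\mcH}=\sum_{\mv\in\mV}\sqrt{q_\mv}\,f_k(\mv)\int_0^T\e^{-\lambda_k(T-s)}u_\mv(s)\ds,\qquad k\in\nat.
\end{equation}
Since $\sqrt{q_\mv}=0$ at every noise-free vertex, among the boundary vertices only those in $\Gamma':=\{\mv\in\Gamma:q_\mv\neq0\}$ carry a control, and because adding further controls can only make null controllability easier, it suffices to solve the moment problem using $(u_\mv)_{\mv\in\Gamma'}$ alone. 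By hypothesis $\Gamma'=\Gamma$ or $\Gamma'=\Gamma\setminus\{\mv^\ast\}$ for a single boundary vertex $\mv^\ast$.

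Next I would recast this as a vector parabolic moment problem. Grouping the eigenfunctions by the distinct eigenvalues $-\mu_m$ of \eqref{eq:muks}, with $\{f_k:k\in I_m\}$ an orthonormal basis of the $\mu_m$-eigenspace, and substituting $t=T-s$, the conditions split, for each $m$, into
\begin{equation}
\sum_{\mv\in\Gamma'}f_k(\mv)\int_0^T\e^{-\mu_m t}\,\bigl(\sqrt{q_\mv}\,u_\mv(T-t)\bigr)\dt=\e^{-\mu_m T}\langle z_0,f_k\rangle_{\mcH},\qquad k\in I_m.
\end{equation}
Solvability hinges on two ingredients: (i) at each eigenvalue the vertex-value vectors $\{(f_k(\mv))_{\mv\in\Gamma'}:k\in I_m\}$ must be linearly independent in $\real^{|\Gamma'|}$, uniformly in $m$, so that the prescribed right-hand side can be matched by a suitable vector of moments $\xi^{(m)}=\bigl(\int_0^T\e^{-\mu_m t}\sqrt{q_\mv}\,u_\mv(T-t)\dt\bigr)_{\mv\in\Gamma'}$; and (ii) for each fixed $\mv$ the scalar moments $\{\xi^{(m)}_\mv\}_m$ must be realizable by a single $L^2$ control through a biorthogonal family to $\{\e^{-\mu_m t}\}$.

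Ingredient (ii) is the routine part. By the spectral asymptotics \eqref{eq:mukasymp} one has $\mu_m\sim m^2$, hence $\sum_{m\geq1}\mu_m^{-1}<\infty$, so $\{\e^{-\mu_m t}\}$ is minimal in $L^2(0,T)$ and admits a biorthogonal family $\{\theta_m\}$ whose norms grow at most subexponentially in $\sqrt{\mu_m}$ (Fattorini--Russell--type estimates). Setting $\sqrt{q_\mv}\,u_\mv(T-\cdot):=\sum_m\xi^{(m)}_\mv\theta_m$ then produces the control, and the very fast decay $\e^{-\mu_m T}$ of the right-hand side dominates both $\|\theta_m\|$ and the norm of the (pseudo)inverse from (i), guaranteeing convergence of the series in $L^2(0,T;\real^{|\Gamma'|})$ for every $z_0\in\mcH$.

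The heart of the proof, and the step I expect to be the main obstacle, is ingredient (i): the uniform independence/minimality of the restricted vertex-value vectors, equivalently, that the vector-exponential family $\{(f_k(\mv))_{\mv\in\Gamma'}\,\e^{\pm i\sqrt{\mu_m}\,\cdot}:k\in I_m,\ m\in\nat\}$ is an $\mathcal L$-basis in its closed span. This is precisely the spectral content of the exact Neumann controllability of the associated wave equation on the tree $\mG$ with controls at all boundary vertices except possibly $\mv^\ast$. I would establish that controllability via the ST-active set theory of \cite{AvdZhao22}, as recorded in Proposition \ref{prop:STtree}, and then transfer it to the parabolic setting in the spirit of \cite{Avd1,AvdMikh08}: the uniform frequency gap furnished by $\sqrt{\mu_m}\sim m$ (again \eqref{eq:mukasymp}) turns the wave exact controllability into the Riesz-basis property of the vector exponentials, which supplies exactly the uniform independence needed in (i). Leaving one boundary vertex uncontrolled is admissible, because the continuity conditions \eqref{eq:contv} propagate the remaining boundary data through the tree; verifying the ST-active set hypotheses for $\mG$ is where the genuine work lies.
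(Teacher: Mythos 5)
Your overall route coincides with the paper's: you reduce the strong Feller property to null controllability, pass through the moment problem of Proposition \ref{prop:0contrformula} with $B^*=-L$ from Proposition \ref{prop:Bstar}, set the controls to zero outside $\Gamma'$, and identify as the key input the exact controllability of the associated wave system, obtained from the ST-active set theory of \cite{AvdZhao22} via Proposition \ref{prop:STtree}. Up to that point the proposal is sound and matches the published proof.

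The gap is in what you call the ``routine part'', ingredient (ii). The asymptotics \eqref{eq:mukasymp} give only two-sided quadratic growth $\tilde{l}_1 k^2\le \mu_k\le \tilde{l}_2 k^2$; they give no lower bound whatsoever on $\mu_{m+1}-\mu_m$, and Fattorini--Russell-type norm estimates for a biorthogonal family of $\{\e^{-\mu_m t}\}$ require precisely such a separation condition (e.g.\ $\sqrt{\mu_{m+1}}-\sqrt{\mu_m}\ge\delta>0$), not mere growth. Distinct eigenvalues of a quantum graph can cluster: \eqref{eq:mukasymp} is compatible with $\mu_{m+1}-\mu_m$ decaying super-exponentially, in which case $\|\theta_m\|_{L^2(0,T)}\gtrsim|\mu_{m+1}-\mu_m|^{-1}$ and the factor $\e^{-\mu_m T}$ no longer dominates, so your series defining the control need not converge in $L^2$. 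For the same reason the phrase ``the uniform frequency gap furnished by $\sqrt{\mu_m}\sim m$'' is a non sequitur: growth asymptotics do not furnish a gap. This is exactly the obstruction the paper flags in the introduction when it rejects the direct method of moments (``lack of sufficient information on the separation of consecutive eigenvalues''). The paper's proof avoids it by never splitting the problem into per-eigenvalue linear algebra plus \emph{scalar} moment realization: it keeps the vector structure throughout, applying \cite[Thm.~II.5.20]{AvdIvbook} to the wave vector exponentials $E_{\pm k}(t)=\alpha_k\e^{\pm i\sqrt{\tilde{\la}_k}t}$ (an $\mathcal{L}$-basis by controllability, hence with uniformly bounded biorthogonal norms $\|E_k'\|$) to obtain minimality of the parabolic \emph{vector} family $Q_k(t)=\alpha_k\e^{-\tilde{\la}_k t}$ in $L^2(0,\tau;\real^m)$ together with the bound $\|Q_k'\|\le C(\tau)\|E_k'\|\e^{\beta\sqrt{\tilde{\la}_k}}$; the control is then the series built from the $Q_k'$. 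Clustered eigenvalues are harmless in that framework because their vertex-value vectors remain uniformly transversal --- which is what the $\mathcal{L}$-basis property encodes and what your scalar, vertex-by-vertex decomposition throws away. To repair the argument, replace ingredient (ii) and the scalar biorthogonal family by this vector transfer theorem.
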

\begin{proof}
We will deduce the result from the exact controllability of the a boundary control problem for a wave equation on a network:
  \begin{equation}\label{eq:wave}
 		\left\{\begin{aligned}
 			\ddot{z}_{\me}(t,x) & =z_{\me}''(t,x)-(p_{\me}(x)+1) \cdot z_{\me}(t,x), &  x\in (0,\ell_e),\; t\in(0,T],\; \me\in\mE\;\; \\
 			0 & = I_{\mv}Z(t,\mv),\;   &t\in(0,T],\; \mv\in\mV,\;\;\\
 			Q^{\frac{1}{2}}u_\mv(t)& = \sum_{e\in E_\mv}z'_e(t,\mv), & t\in(0,T],\; \mv\in\mV,\;\;\\
 			z_{\me}(0,x) & =  \dot z_{\me}(0,x)=0 &x\in [0,\ell_{\me}],\; \me\in\mE.\;\;
 		\end{aligned}
 		\right.
 	\end{equation}
 Defining
    \[\Gamma_1:=\Gamma\setminus\{\gamma_1\}\]
for $\gamma_1\in\Gamma$ arbitrary and $J^*=\emptyset$, we obtain by Proposition \ref{prop:STtree} that $\{\Gamma_1,J^*\}$ is an ST active set of $\mG$ in the sense of Definition \ref{defi:STactive}, that is of \cite[Def.~3.2]{AvdZhao22}. Hence, we can apply \cite[Thm.~3.14]{AvdZhao22} to obtain that the system \eqref{eq:wave} is exactly controllable on $\mcH_{\frac12}$. Furthermore, for the corresponding operator $A- Id$ there is a strictly positive sequence 
\[0<\tilde{\la}_0\leq \tilde{\lambda}_1\leq \cdots\leq \tilde{\lambda}_k\leq\tilde{\la}_{k+1}\leq\cdots ,\tilde{\la}_k\to +\infty,\]
with
\[\tilde{\la}_k=\la_k+ 1,\quad k\in\nat \]
such that 
\[(A-  Id)f_k=-\tilde{\la}_k f_k,\quad k\in\nat,\]
where $(-\la_k)$ are the eigenvalues of $A$, and the eigenfunctions $f_{k},$ $k\in\nat$ form a complete orthonormal system in $\mcH$, see \eqref{eq:eigenvaluesofA} and \eqref{eq:fknet}. 

We will use the method introduced in \cite{AvdMikh08} to show that \eqref{eq:netcontrol}, that is \eqref{eq:heat} with first equations \begin{equation}\label{eq:heatplusla1}
    \dot{z}_{\me}(t,x) =z_{\me}''(t,x)-(p_{\me}(x)+1) \cdot z_{\me}(t,x),\; x\in (0,\ell_e),\; t\in(0,T],\; \me\in\mE
    \end{equation} 
    is null-controllable at any time $\tau>0$, for any $z_0\in\mcH_{-\frac{1}{2}}$. For this purpose we introduce the spectral data
\begin{equation}
    \alpha_k\coloneqq \left(\frac{f_k(\mv)}{\sqrt{\tilde{\la}_k}}\right)_{\mv\in\Gamma_1}\in\real^m,\quad k\in\nat,
\end{equation}
where $m=|\Gamma_1|$ is the number of the vertices in $\Gamma_1$.
By \cite[Thm.~III.3.10]{AvdIvbook} we know that the exactly controllability of  system \eqref{eq:wave} at time $T$ is equivalent to the fact that
\begin{equation}
    E_{\pm k}(t)\coloneqq \alpha_k\cdot\e^{\pm i\sqrt{\tilde{\la}_k}t},\quad  k\in\nat
\end{equation}
forms an $\mathcal{L}$-basis in $L^2\left(0,T;\comp^m\right)$. From \cite[Thm.~II.5.20]{AvdIvbook} we obtain that the system
\begin{equation}
    Q_k(t)\coloneqq \alpha_k\cdot \e^{-\tilde{\la}_kt},\quad  k\in\nat
\end{equation}
is minimal in $L^2\left(0,\tau;\real^m\right)$ for any $\tau>0$. Furthermore, for the corresponding bi-orthogonal family
\begin{equation}
    Q_k'\in L^2(0,\tau;\real^m),\quad  k\in\nat
\end{equation}
with
\begin{equation}\label{eq:Qkbiorthogonal}
    \langle Q_{\ell},Q'_k\rangle_{L^2(0,\tau;\real^m)}\coloneqq 
\sum_{\mv\in\Gamma_1}\int_0^{\tau} (Q_k(s))_{\mv}\cdot Q'_k(s)_{\mv} \ds =\delta_{k,\ell},\quad k,\ell\in\nat,
\end{equation}
there exist positive constants $C(\tau)$ and $\beta$ such that
\begin{equation}
  \left\|Q'_k\right\|_{L^2(0,\tau;\real^m)}\leq C(\tau)\cdot \left\|E'_k\right\|_{L^2(0,\tau;\comp^m)}\e^{\beta \sqrt{\tilde{\la}_k}},\quad k\in\nat
\end{equation}
for any $\tau>0$. Since $\{E_{\pm k}\}_{k\in\nat}$ is an $\mathcal{L}$-basis in $L^2\left(0,T;\comp^m\right)$, there exists $c>0$ such that
\[\left\|E'_k\right\|_{L^2(0,T;\comp^m)}\leq c,\quad k\in\nat\]
holds, see \cite[page 26]{AvdIvbook}. Together with \eqref{eq:lambdaknet} and the fact that $z_0\in \mcH_{-\frac{1}{2}}$ we obtain that for the control defined as
\begin{equation}\label{eq:defiutilde}
    \tilde{u}(s)=\sum_{k=0}^{\infty}\frac{\langle z_0,f_k\rangle_{\mcH}}{\sqrt{\tilde{\la}_k}}\e^{-\tilde{\la}_k T}Q_k'(T-s) \in L^2\left(0,T;\real^m\right)
\end{equation}
is satisfied. Furthermore, from \eqref{eq:Qkbiorthogonal} follows that 
\begin{equation}\label{eq:momproblQk}
    \frac{\langle z_0,f_k\rangle_{\mcH}}{\sqrt{\tilde{\la}_k}}\cdot \e^{-\tilde{\la}_k T}=\langle Q_k,\tilde{u}^{T}\rangle_{L^2(0,T;\real^m)}, \quad  k\in\nat
\end{equation}
with $\tilde{u}^{T}(s)=\tilde{u}(T-s)$ holds.

By Propositions \ref{prop:0contrformula} and \ref{prop:Bstar} we know that system \eqref{eq:heat} with first equations \eqref{eq:heatplusla1} is null-controllable  at time $T>0$, for $z_0\in\mcH_{-\frac{1}{2}}$  if and only if  there is a control $\tilde{y}\in L^2(0,T;\real^n)$ satisfying the moment problem
\begin{equation}\label{eq:momproblnullcontr}
		\frac{\langle z_0,f_k\rangle_{\mcH}}{\sqrt{\tilde{\la}_k}}\cdot \e^{-\tilde{\la}_k T}=-\sum_{\mv\in\mV}\frac{\sqrt{q_{\mv}}\cdot f_k(\mv)}{\sqrt{\tilde{\la}_k}}\int_0^{T}\e^{-\tilde{\la}_k (T-s)}\tilde{y}_{\mv}(s)\ds, \quad  k\in\nat.
	\end{equation}
Define
\begin{equation}
    \tilde{y}_{\mv}(s)=
    \begin{cases}
        \frac{1}{\sqrt{q_{\mv}}}\tilde{u}_{\mv}(s), & \mv\in\Gamma_1,\\
        0, & \mv\notin\Gamma_1
    \end{cases}
\end{equation}
for $s\in[0,T]$. Then by \eqref{eq:defiutilde}, $\tilde{y}\in L^2(0,T;\real^n)$ holds. Taking inner product, from \eqref{eq:momproblQk} and the assumption on $Q$ follows that \eqref{eq:momproblnullcontr} is satisfied by $\tilde{y}$.\\

Turning back to the original problem, we have to show the null-controllability of problem \eqref{eq:netcontrol}, that is of \eqref{eq:heat}. For a given $z_0\in\mcH_{-\frac{1}{2}}$, define
\begin{equation}
    u(s)=\sum_{k=0}^{\infty}\frac{\langle z_0,f_k\rangle_{\mcH}}{\sqrt{\tilde{\la}_k}}\cdot \e^{-({\la}_k+1)T+ s} Q_k'(s) \in L^2\left(0,T;\real^m\right),
\end{equation}
where $\left(Q'_k(t)\right)$ is the biorthogonal family introduced above, and let
\begin{equation}\label{eq:controlheat}
    y_{\mv}(s)=
    \begin{cases}
        \frac{1}{\sqrt{q_{\mv}}} u_{\mv}(s), & \mv\in\Gamma_1,\\
        0, & \mv\notin\Gamma_1
    \end{cases}
\end{equation}
for $s\in[0,T]$.
Let $\ell\in\nat$ arbitrary. Then  we have
\begin{equation}
\begin{split}
    & - \sum_{\mv\in\mV}\sqrt{q_{\mv}}\cdot f_{\ell}(\mv)\int_0^{T}\e^{-\la_{\ell} (T-s)}y_{\mv}(s)\ds=\sum_{\mv\in\mV} \sqrt{q_{\mv}}\cdot f_{\ell}(\mv)\int_0^{T}\e^{-\la_{\ell} s}y_{\mv}(T-s)\ds\\ 
    &=\sum_{\mv\in\Gamma_1}f_{\ell}(\mv)\int_0^{T}\e^{-\la_{\ell} s }\sum_{k=0}^{\infty}\frac{\langle z_0,f_k\rangle_{\mcH}}{\sqrt{\tilde{\la}_k}}\cdot \e^{{-\la}_kT- s} (Q_k'(s))_{\mv}\ds\\
    &=\sum_{k=0}^{\infty}\frac{\langle z_0,f_k\rangle_{\mcH}}{\sqrt{\tilde{\la}_k}}\cdot \e^{{-\la}_kT}\sum_{\mv\in\Gamma_1}f_{\ell}(\mv)\int_0^{T}\e^{-\tilde{\la}_{\ell} s } (Q_k'(s))_{\mv}\ds\\
    &=\sum_{k=0}^{\infty}\frac{\langle z_0,f_k\rangle_{\mcH}}{\sqrt{\tilde{\la}_k}}\cdot \e^{{-\la}_kT}\sqrt{\tilde{\la}_{\ell}}\cdot \langle Q_{\ell},Q'_k\rangle_{L^2(0,T;\real^m)}\\
    &=\sum_{k=0}^{\infty}\langle z_0,f_k\rangle_{\mcH}\cdot \e^{{-\la}_kT}\cdot \delta_{k,\ell}=\langle z_0,f_{\ell}\rangle_{\mcH}\cdot \e^{-\la_{\ell} T}.
\end{split}
	\end{equation}
Thus, we obtained that  for any $z_0\in\mcH_{-\frac{1}{2}}$ and $T>0$, with control defined in \eqref{eq:controlheat},
\begin{equation}
    \langle z_0,f_{\ell}\rangle_{\mcH}\cdot \e^{-\la_{\ell} T}=-\sum_{\mv\in\mV}\sqrt{q_{\mv}}\cdot f_{\ell}(\mv)\int_0^{T}\e^{-\la_{\ell} (T-s)}y_{\mv}(s)\ds,\quad \ell\in\nat
\end{equation}
is satisfied. Hence, by Proposition \ref{prop:0contrformula}, system
\eqref{eq:heat} is null-controllable at any time $T>0$ for any $z_0\in\mcH_{-\frac{1}{2}}$ hence for   for any $z_0\in\mcH$
and equivalently, the transition semigroup $\mathcal{P}_t$ is strong Feller at any time $T>0$.
\end{proof}

Example \ref{ex:eqst} below shows that, in general, the above theorem cannot be strengthened.

\begin{definition}
    We say that a tree $\mG$ is a \emph{star graph} if it has one central vertex called $\mv_c$, boundary vertices $\mv_1,\dots ,\mv_N$, and the edges of $\mG$ connect the central vertex to the boundary vertices, see Figure \ref{fig:stargraph}.

\begin{figure}[!ht]
\centering
\begin{tikzpicture}
    \node[circle,fill=black] at (360:0mm) (center) {};
    \foreach \n in {1,...,7}{
        \node[circle,fill=black] at ({\n*360/7}:2cm) (n\n) {};
        \node[anchor=north east] at ({\n*360/7}:2cm) {$\mv_{\n}$};
        \draw (center)--(n\n);
        \node at (0,-2*1.5) {}; 
        \node[anchor=south east] at (center) {$\mv_{c}$};
    }
\end{tikzpicture}
\caption{Star graph}\label{fig:stargraph}
\end{figure}
\end{definition}

\begin{example}[Equilateral Neumann star graph]\label{ex:eqst}
		Let $\mG$ be a star graph having $N$ edges, with central vertex $\mv_c$ and boundary vertices $\mv_1,\mv_2,\dots ,\mv_N$. Assume that $Q=\diag(q_{\mv})_{\mv\in\mV}$ is diagonal, $\ell_{\me}=\ell>0$, $p_{\me}=0$ and $c_{\me}=1$ for all $\me\in\mE$ (hence, $A$ is the Laplacian on $\mG$).  
        Then if $q_{\mv}= 0$ holds for more than one boundary vertex,  the transition semigroup $\mathcal{P}_t$ defined in \eqref{eq:transsgrnet} is not strong Feller at any time $T>0$.
\end{example}
 \begin{proof}
        Parameterize the edges on $\left[0,\ell\right]$ so that the $0$'s correspond to the boundary vertices.  By \cite[Ex.~2.1.12]{BeKu}  and \cite[Thm.~3.7.1]{BeKu} there is a sequence of distinct eigenvalues of $A$ with 
				\[-\mu_k=-\frac{\left(\frac12+k\right)^2\pi^2}{\ell^2},\quad k\in\nat,\] 
				and all eigenvalues have multiplicity $N-1$, see also \cite{Be85,BeLu05}. For each $k\in\nat$, there are $N-1$ linearly independent normalized eigenfunctions of the form
        \begin{equation}\label{eq:gkjs}
        \begin{split}
        &g_{k,1}(x)=\sqrt{\frac{1}{\ell}}\left(\cos(\sqrt{\mu_k}x),-\cos(\sqrt{\mu_k}x),0,0,\dots,0\right)\\
        &g_{k,2}(x)=\sqrt{\frac{1}{\ell}}\left(\cos(\sqrt{\mu_k}x),0,-\cos(\sqrt{\mu_k}x),0,0,\dots\right)\\
        &\vdots\\
        & g_{k,N-1}(x)=\sqrt{\frac{1}{\ell}}\left(\cos(\sqrt{\mu_k}x),0,0,\dots,-\cos(\sqrt{\mu_k}x)\right),  
        \end{split}
        \end{equation}
where the coordinates are the values of the functions on the $N$ edges. Hence, the eigenfunctions  become $0$ when evaluated at the central vertex $\mv_c$, while
\[g_{k,j}(\mv_1)=\sqrt{\frac{1}{\ell}},\;g_{k,j}(\mv_{j+1})=-\sqrt{\frac{1}{\ell}},\quad j=1,2,\dots ,N-1,\]
and
\[g_{k,j}(\mv_i)=0\quad i\geq 2,\, i\neq j+1,\quad j=1,2,\dots ,N-1.\] By the assumption on $Q$ and with an appropriate labelling of the vertices, that is, of the edges, there exists $j$ such that for each $k\in\nat$
    \[\left(Q^{\frac{1}{2}}Lg_{k,j}\right)_{\mv}=\sqrt{q_{\mv}}g_{k,j}(\mv)= 0\text{ for each }\mv\in\mV.\]
    Finally, recall that $B^*=-L$ by Proposition \ref{prop:Bstar}. Thus for all eigenfunctions $g_{k,j}$, $k\in\nat$ the right-hand-side, hence the left-hand-side of \eqref{eq:0contrprf1} would be zero for all $z_0\in\mcH$ which is impossible.
    Hence, by Proposition \ref{prop:0contrformula}, the problem \eqref{eq:netcontrol} is not null controllable in any time $T>0$, and equivalently, the transition semigroup $\mathcal{P}_t$ defined in \eqref{eq:transsgrnet} is not strong Feller at any time $T>0$. 
 \end{proof}

We also show an example for general star graphs with different edge lengths. Here we have to assume that the edge lengths for which no boundary noise is present, fulfill a certain rational dependence condition.

\begin{example}[General Neumann star graph]\label{cor:st}
Let $\mG$ be a star graph having $N$ edges, with central vertex $\mv_c$  and boundary vertices $\mv_1,\mv_2,\dots ,\mv_N$. Assume that $p_{\me}=0$, $c_e=1$, the edge lengths are $\ell_i>0$, $i=1,2,\dots,N$,  $Q=\diag(q_{\mv})_{\mv\in\mV}$ is diagonal and  $q_{\mv}= 0$ holds for at least two boundary vertices $\mv_1$ and $\mv_i$ such that 
\begin{equation}\label{eq:ell1elli}
    \frac{\ell_1}{\ell_i}=\frac{2n_1+1}{2n_i+1}\text{ for some }n_1,n_i\in\nat.
\end{equation} Then the transition semigroup $\mathcal{P}_t$ defined in \eqref{eq:transsgrnet} is not strong Feller at any time $T>0$.
 \end{example} 
 \begin{proof}
  We parametrize the edges on $\left[0,\ell_i\right]$,  $i=1,2,\dots,N$, so that the $0$'s correspond to the boundary vertices. We are going to define an eigenfunction supported on the edges $1$ and $i$ with 
 \begin{equation}\label{eq:fki}
        f_{k,i}(x)=\left(\cos(\sqrt{\mu_k}x),0,0,\dots,s_{k,i}\cos(\sqrt{\mu_k}x)),0,\dots ,0\right).      
        \end{equation}
Since noise is not present in $\mv_1$ and $\mv_i$, for $z_0=f_{k,i}$ \eqref{eq:0contrprf1} can not be satisfied. Thus, by Proposition \ref{prop:0contrformula} and the equivalence of the null controllability to the strong Feller property we obtain the statement.

By assumption \eqref{eq:ell1elli} there are integers $n_1$ and $n_i$ such that      \begin{equation}\label{eq:ell1perelli}
            \mu_k=\frac{(n_i+\frac{1}{2})^2\pi^2}{\ell_i^2}=\frac{(n_1+\frac{1}{2})^2\pi^2}{\ell_1^2}.
        \end{equation}
Let $f_{k,i}$ be the normalized eigenfunction as in \eqref{eq:fki} with this $\mu_k$ and with appropriate coefficients $r_{k,i}$, $s_{k,i}$ which we specify below.  From \eqref{eq:ell1perelli} follows that
\begin{equation}\label{eq:continmvc}
            \cos(\sqrt{\mu_k} \ell_1)=\cos(\sqrt{\mu_k} \ell_i)=0
        \end{equation}
thus the continuity condition in $\mv_c$ is satisfied for $f_{k,i}.$       
Let $s_{k,i}=-1$ if the parity of $n_1$ and $n_i$ is the same, and let $s_{k,i}=1$ otherwise. Then
\begin{equation}
            \sin((n_1+\frac{1}{2})\pi)+s_{k,i}\sin((n_i+\frac{1}{2})\pi)=0
        \end{equation}
holds, hence
\begin{equation}\sqrt{\mu_k}\sin(\sqrt{\mu_k}\ell_1)+\sqrt{\mu_k}s_{k,i}\sin(\sqrt{\mu_k}\ell_i)=0
        \end{equation}
and thus the Kirchhoff--Neumann condition in $\mv_c$ is satisfied. The Neumann-condition in the boundary vertices holds by the definition \eqref{eq:fki}.
Thus, the proof is completed.
\end{proof}

\begin{remark}
    The above example can be generalized by ''attaching'' arbitrary trees to at most $N-2$ edges of the star graph with arbitrary edges lengths, such that the lengths of the remaining $2$ edges satisfy \eqref{eq:ell1elli}. We can define an eigenfunction in the same way as in \eqref{eq:fki}, which is supported only on these 2 edges. When noise is not present in the corresponding 2 boundary vertices, the system can not be null controllable, hence $\mathcal{P}_t$ is not strong Feller at any time $T>0$.
\end{remark}

 The next example shows that the existence of a loop always destroys the strong Feller property.  
\begin{example}\label{ex:loop}
    Assume that $\mG$ contains a loop, that is a chain of vertices \[\mv, \mv_1,\mv_2, \dots,\mv_n, \mv\] connected by edges, with each of the intermediate vertices $\mv_1,\mv_2, \dots, \mv_n$ having degree 2. Assume that $p_{\me}=0$, $c_e=1$. Then the transition semigroup $\mathcal{P}_t$ defined in \eqref{eq:transsgrnet} is not strong Feller at any time $T>0$. 
    By \cite[Rem.~2.1]{BeLi}, we may assume that there is a looping edge in $\mG$, that is, an edge having the same origin and boundary vertex. Then \cite[Ex.~3.3]{BeLi} implies that there is an eigenfunction $f_k$ of $A$ which is supported exclusively on the loop and, in particular, its value equals to zero on all vertices of $\mG$. Hence, as $B^*=-L$ by Proposition \ref{prop:Bstar}, it follows from Proposition \ref{prop:0contrformula} that the problem \eqref{eq:netcontrol} is not null controllable in any time $T>0$, and equivalently, the transition semigroup $\mathcal{P}_t$ defined in \eqref{eq:transsgrnet} is not strong Feller at any time $T>0$.
\end{example}

	\section{Existence of an invariant measure}\label{sec:invarmeasure}
	In this section, we are interested in the existence of an invariant measure for the general boundary noise problem \eqref{eq:opAKmax1gen}, that is, for \eqref{eq:stocauchygen}. A probability measure $\mu$ on $({H},\mathcal{B}({H}))$ is said to be \emph{invariant} for \eqref{eq:stocauchygen} if 
	\begin{align*}
	\int_{{H}}\mathcal{P}_t\phi(x)\mu(dx)=\int_{H}\phi(x)\mu(dx), 
	\end{align*}
 for all $\phi\in C_b({H}),$ and $t>0$, where $\mathcal{P}_t$ is the transition semigroup defined in \eqref{eq:transsgr}.\\
 
	\begin{theorem}\label{inv_measure}
		Assume that Assumptions \ref{ass:noisecontrol} are satisfied, moreover, $B\in\mathcal{L}(U,H_{-\frac{1}{2}})$ holds. Then the system \eqref{eq:opAKmax1gen}, that is, \eqref{eq:stocauchygen} has an invariant measure if and only if $\ker A$ is empty or there exists an orthonormal basis $\Lambda:=(f_n)_{n\in \Gamma}$ of $\ker A$ such that
		\begin{equation}\label{eq:inv1}
			\|Q^{\frac{1}{2}}B^*f_n\|=0, \qquad \forall f_n\in\Lambda.	      
		\end{equation}
	\end{theorem}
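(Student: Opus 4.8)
The plan is to characterize the existence of an invariant measure through the boundedness of the family of covariance operators $(Q_t)_{t\ge0}$, and then to extract condition \eqref{eq:inv1} from a spectral computation of $\Tr Q_t$. Since \eqref{eq:stocauchygen} is a linear (Ornstein--Uhlenbeck) equation, its weak solution started at $z_0$ is Gaussian with mean $\mathbb{T}(t)z_0$ and covariance $Q_tx=\int_0^t\mathbb{T}_{-1}(s)BQB^*\mathbb{T}^*(s)x\ds$. Because $A$ is self-adjoint and dissipative with $\mathbb{T}^*(s)f_k=\e^{-\lambda_k s}f_k$, the mean satisfies $\mathbb{T}(t)z_0\to P_0z_0$ as $t\to\infty$, where $P_0$ is the orthogonal projection onto $\ker A$; in particular the mean never escapes, so by the standard theory of Gaussian invariant measures \cite{DPZbook} it suffices to understand when $\sup_{t\ge0}\Tr Q_t<\infty$.

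First I would compute $\Tr Q_t$ in the orthonormal eigenbasis $(f_k)$. Since $B\in\mathcal{L}(U,H_{-\frac12})$ gives $B^*\in\mathcal{L}(H_{\frac12},U)$ and $f_k\in\mcD(A)\subset H_{\frac12}$, each $B^*f_k$ is well defined, and $\mathbb{T}^*(s)f_k=\e^{-\lambda_k s}f_k$ yields
\begin{equation*}
\Tr Q_t=\sum_{k}\|Q^{\frac12}B^*f_k\|^2\int_0^t\e^{-2\lambda_k s}\,ds=\int_0^t\big\|\mathbb{T}_{-1}(s)BQ^{\frac12}\big\|^2_{\HS}\,ds,
\end{equation*}
which is finite for every fixed $t$ by Assumption \ref{ass:noisecontrol}(3). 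Splitting the sum according to whether $\lambda_k=0$ or $\lambda_k>0$, the kernel part equals $t\,S_0$ with $S_0:=\sum_{\lambda_k=0}\|Q^{\frac12}B^*f_k\|^2$, whereas the remaining part $S_1(t):=\sum_{\lambda_k>0}\|Q^{\frac12}B^*f_k\|^2\,\frac{1-\e^{-2\lambda_k t}}{2\lambda_k}$ is nondecreasing in $t$. Using the spectral gap $\lambda_*:=\inf\{\lambda_k:\lambda_k>0\}>0$ (which exists as $\lambda_k\to+\infty$ and $\ker A$ is finite-dimensional), one has $1-\e^{-2\lambda_k t_0}\ge 1-\e^{-2\lambda_* t_0}=:c>0$ for every fixed $t_0>0$, whence $\sup_t S_1(t)\le c^{-1}S_1(t_0)\le c^{-1}\Tr Q_{t_0}<\infty$. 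Thus $S_1$ is always bounded, so $\sup_t\Tr Q_t<\infty$ if and only if $S_0=0$; and when $S_0=0$ the limit $Q_\infty:=\lim_{t\to\infty}Q_t$ exists as a trace-class operator, and letting $s\to\infty$ in the covariance identity $Q_{t+s}=\mathbb{T}(t)Q_s\mathbb{T}^*(t)+Q_t$ shows that $N(0,Q_\infty)$ is an invariant measure.

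It remains to match $S_0=0$ with the stated condition and to settle the converse. Since the terms $\|Q^{\frac12}B^*f_k\|^2$ with $\lambda_k=0$ run over an orthonormal basis of $\ker A$ and their sum is invariant under a unitary change of basis, $S_0=0$ is equivalent to $Q^{\frac12}B^*|_{\ker A}=0$, i.e.\ to the existence of an orthonormal basis $(f_n)_{n\in\Gamma}$ of $\ker A$ satisfying \eqref{eq:inv1}; if $\ker A=\{0\}$ the sum is empty and $S_0=0$ automatically. For the reverse implication I would argue directly: if $S_0>0$, choose a unit vector $g\in\ker A$ with $\sigma:=\|Q^{\frac12}B^*g\|>0$; since $A^*g=Ag=0$, the weak formulation of \eqref{eq:stocauchygen} gives $\langle w(t),g\rangle=\langle z_0,g\rangle+\langle\beta(t),B^*g\rangle$, a one-dimensional Brownian motion with positive variance rate $\sigma^2$, which admits no invariant probability measure; pushing a hypothetical invariant measure of \eqref{eq:stocauchygen} forward by $x\mapsto\langle x,g\rangle$ would produce one, a contradiction. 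The only genuinely delicate step is the uniform boundedness of $S_1$, where one combines the spectral gap with the finiteness of $\Tr Q_{t_0}$ from Assumption \ref{ass:noisecontrol}(3); the rest is bookkeeping around the eigenexpansion and the Gaussian criterion.
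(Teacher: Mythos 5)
Your proof is correct, and while its skeleton (expand $\Tr Q_t$, equivalently $\int_0^t\|\mathbb{T}_{-1}(s)BQ^{\frac12}\|^2_{\HS}\,ds$, in the eigenbasis and split off the $\ker A$ contribution) coincides with the paper's, the two key steps are genuinely different. For sufficiency, the paper controls the positive-spectrum part by writing $B^*f_j=(1+\lambda_j)\mathbb{D}_1^*f_j$ and bounding the sum by $\tfrac12\left(\tfrac{1}{\mu_1}+1\right)\|(1-A)^{-\frac12}BQ^{\frac12}\|^2_{\HS}$, which is exactly where the extra hypothesis $B\in\mathcal{L}(U,H_{-\frac12})$ enters; your spectral-gap comparison $\frac{1-\e^{-2\lambda_k t}}{2\lambda_k}\le c^{-1}\frac{1-\e^{-2\lambda_k t_0}}{2\lambda_k}$ with $c=1-\e^{-2\lambda_* t_0}$ (valid since the eigenvalues accumulate only at $+\infty$, so $\lambda_*>0$) reduces uniform boundedness of the tail to the finiteness of $\Tr Q_{t_0}$, which is already Assumption \ref{ass:noisecontrol}(3). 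Thus your argument shows the hypothesis $B\in\mathcal{L}(U,H_{-\frac12})$ is not needed for this theorem at all, a genuine (if modest) strengthening; what the paper's route buys instead is an explicit quantitative bound on the limiting trace in terms of $\|(1-A)^{-\frac12}BQ^{\frac12}\|_{\HS}$, and a transparent display of how the Dirichlet operator mediates the estimate. For necessity, the paper invokes the iff criterion of \cite[Thm.~11.17(iii)]{DPZbook} and notes the integral grows at least linearly; you instead project the dynamics onto a kernel direction $g$ with $\|Q^{\frac12}B^*g\|>0$, obtain a one-dimensional Brownian motion (using $\mathbb{T}^*(t)g=g$), and rule out an invariant measure by a characteristic-function argument on the pushforward — more elementary and self-contained, since it does not rely on the "only if" half of the Gaussian criterion. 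Your observation that $S_0=\sum_{\lambda_k=0}\|Q^{\frac12}B^*f_k\|^2$ is basis-independent also cleans up the paper's slightly awkward quantification over all orthonormal bases of $\ker A$. The one cosmetic gloss is your parenthetical claim that $\Tr Q_t<\infty$ for \emph{every} $t$ "by Assumption (3)" (which is stated only on $[0,T]$), but this follows at once from your own splitting $\Tr Q_t=S_0t+S_1(t)$ and the bound on $S_1$, so nothing is at stake.
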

	\begin{proof}
        By \cite[Thm.~11.17.(iii)]{DPZbook}, the existence of the invariant measure is equivalent to the fact that
        \begin{equation}\label{eq:invarmeasurequiv}
			\int_0^{+\infty}\left\|\mathbb{T}_{-1}(t)BQ^{\frac{1}{2}}\right\|^2_{\HS(U,H)}dt<+\infty,   
        \end{equation}
		where $\HS$ denotes the Hilbert-Schmidt norm between the appropriate spaces. 
  
        Let $T>0$ and let $(f_k)_{k\in\mathbb{N}}$ be an orthonormal basis of $H$ formed by the eigenfunctions of $A$ with corresponding eigenvalues $-\lambda_k$, see \eqref{eq:fk}. Then, we have 
		\begin{align}
			\int_0^T\left\|\mathbb{T}_{-1}(t)BQ^{\frac{1}{2}}\right\|^2_{\HS(U,H)}dt&=\int_0^T\left\|Q^{\frac{1}{2}}B^*\mathbb{T}(t)\right\|^2_{\HS(H,U)}dt\notag\\
			&=\sum_{k\in\mathbb{N}}\int_0^Te^{-2\lambda_k t}dt\cdot\left\|Q^{\frac{1}{2}}B^*f_k\right\|^2\label{eq:invproof}
		\end{align}
		Now, assume that for all orthonormal basis $\Lambda$ of $\ker A$ there exists $f_0\in\Lambda$ such that $\|Q^{\frac{1}{2}}B^*f_0\|>0$. It follows from \eqref{eq:invproof} that 
		\begin{align*}
			\int_0^T\left\|\mathbb{T}_{-1}(t)BQ^{\frac{1}{2}}\right\|^2_{\HS(U,H)}dt& \geq \int_0^Te^{-2\lambda_0 t}dt\cdot\left\|Q^{\frac{1}{2}}B^*f_0\right\|^2\\
			&\geq T\left\|Q^\frac{1}{2}B^*f_0\right\|^2.
		\end{align*}
		Hence 
		\[\int_0^{+\infty}\left\|\mathbb{T}_{-1}(t)BQ^{\frac{1}{2}}\right\|^2_{\HS(U,H)}dt=+\infty.\]
		Consequently, the system \eqref{eq:opAKmax1gen} does not have an invariant measure. 
  
        For the converse, we assume that there exists an orthonormal basis $\Lambda:=(f_n)_{n\in\Gamma}$ of $\ker A$ such that \eqref{eq:inv1} holds. By completing $\Lambda$ to an orthonormal basis $(f_k)_{k\in \mathbb{N}}$ of  $H$ formed by the eigenfunctions of $A$ with corresponding eigenvalues $-\lambda_k$, it follows from \eqref{eq:invproof} that 
		\begin{align*}
			\int_0^T\left\|\mathbb{T}_{-1}(t)BQ^{\frac{1}{2}}\right\|^2_{\HS(U,H)}dt&= \sum_{\lambda_j>0} \int_0^Te^{-2\lambda_jt}dt\cdot\|Q^{\frac{1}{2}}B^*f_j\|^2\\
			&=\sum_{\lambda_j>0} \frac{1-e^{-2\lambda_jT}}{2\lambda_j}\cdot\|Q^{\frac{1}{2}}B^*f_j\|^2\\
			&=\sum_{\lambda_j>0} \frac{1-\e^{-2\lambda_jT}}{2\lambda_j}\cdot\|Q^{\frac{1}{2}}\mathbb{D}_1^*(1-A)f_j\|^2\\
			&=\sum_{\lambda_j>0} \frac{1-e^{-2\lambda_jT}}{2\lambda_j}\cdot\|Q^{\frac{1}{2}}\mathbb{D}_1^*(1+\lambda_j)f_j\|^2\\
			&\leq \sum_{\lambda_j>0} \frac{1+\lambda_j}{2\lambda_j}\cdot\|Q^{\frac{1}{2}}(1+\lambda_j)^{\frac{1}{2}}\mathbb{D}_1^*f_j\|^2\\
			&\leq \sup_{\lambda_j>0} \frac{1+\lambda_j}{2\lambda_j} \sum_{\lambda_j>0}\|Q^{\frac{1}{2}}(1+\lambda_j)^{\frac{1}{2}}\mathbb{D}_1^*f_j\|^2\\
			& \leq\frac{1}{2}\left(\frac{1}{\mu_1}+1\right)\|(1-A)^{\frac{1}{2}}\mathbb{D}_1Q^{\frac{1}{2}}\|^2_{\HS(U,H)}\\
   &=\frac{1}{2}\left(\frac{1}{\mu_1}+1\right)\|(1-A)^{-\frac{1}{2}}BQ^{\frac{1}{2}}\|^2_{\HS(U,H)},
		\end{align*}
		where $\mu_1$ is defined in \eqref{eq:muks} and we have used $B\coloneqq (1-A_{-1})\Dir_1$. Moreover, it follows from the assumptions that \[\|(1-A)^{-\frac{1}{2}}BQ^{\frac{1}{2}}\|_{\HS(U,H)}<\infty.\] Hence, by taking $T\to +\infty$, we obtain 
		that \eqref{eq:invarmeasurequiv} holds, thus the invariant measure exists.
	\end{proof}

    The result \cite[Thm.~9.1.1.(iii)]{DPZbook96} states that if the transition semigroup is strong Feller, then the invariant measure exists if and only if the semigroup $\mathbb T$ is exponentially stable. In the following we show that for the quantum graph problem \eqref{eq:stochnetKnoise} the assumption being strong Feller can be omitted.\\

    To obtain the result, we observe that by Proposition \ref{prop:Bstar}, $B^*$ can be extended as a bounded operator $B^*\in\mathcal{L}(\mcH_{\frac{1}{2}},\mcY)$. Thus, we conclude that \[B\in\mathcal{L}(\mcY,\mcH_{-\frac{1}{2}}).\]
    \begin{corollary}\label{cor:inv} Assume that $c_{\me}=1$, $\me\in\mE$. Then,
    for the network boundary noise problem \eqref{eq:stochnetKnoise} the invariant measure exists if and only if the semigroup $\mathbb T$ generated by $A$ in \eqref{eq:amain} is exponentially stable. In particular, if an invariant measure for \eqref{eq:stochnetKnoise} exists, then it is unique.
    \end{corollary}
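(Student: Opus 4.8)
The plan is to bypass \cite[Thm.~9.1.1.(iii)]{DPZbook96} entirely and read the equivalence off directly from Theorem \ref{inv_measure}, so that the strong Feller property never enters the argument. Since $c_{\me}=1$, Proposition \ref{prop:Bstar} gives $B^*=-L$ on $\mcH_{\frac12}$, hence $B\in\mathcal{L}(\mcY,\mcH_{-\frac12})$, and by Proposition \ref{prop:assumptionfornet} Assumptions \ref{ass:noisecontrol} hold; thus Theorem \ref{inv_measure} applies and an invariant measure exists if and only if $\ker A=\{0\}$ or there is an orthonormal basis $(f_n)$ of $\ker A$ with $Q^{\frac12}B^*f_n=0$, equivalently $Q^{\frac12}Lf_n=0$, for all $n$.

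First I would record the spectral meaning of exponential stability. Because $A$ is self-adjoint, dissipative and has compact resolvent, its spectrum consists of the eigenvalues $-\lambda_k\le 0$ and $\|\mathbb{T}(t)\|=\e^{-\lambda_0 t}$; hence $\mathbb{T}$ is exponentially stable precisely when $\lambda_0>0$, i.e.\ when $0\notin\sigma(A)$, i.e.\ when $\ker A=\{0\}$. Next I would describe $\ker A$: for self-adjoint $A\le 0$ one has $f\in\ker A$ iff $\langle Af,f\rangle_{\mcH}=0$, and integrating by parts (the boundary terms cancel by the continuity--Kirchhoff conditions) gives $\langle Af,f\rangle_{\mcH}=-\sum_{\me\in\mE}\int_0^{\ell_{\me}}\bigl(|f_{\me}'|^2+p_{\me}|f_{\me}|^2\bigr)\dx$. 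Therefore $f\in\ker A$ forces $f_{\me}'=0$ and $p_{\me}f_{\me}=0$ on every edge, so $f$ is constant on each connected component of $\mG$ and vanishes on any component carrying a nontrivial potential. In particular, for a connected graph $\ker A\neq\{0\}$ holds only when $p\equiv 0$, and then $\ker A=\mathrm{span}\{\mathbf 1\}$ with $L\mathbf 1=(1,\dots,1)$.

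I would then assemble the equivalence. If $\ker A=\{0\}$, then $\mathbb{T}$ is exponentially stable and Theorem \ref{inv_measure} already guarantees an invariant measure, so both sides hold. If $\ker A\neq\{0\}$, then $\mathbb{T}$ is not exponentially stable and I must show that the second alternative in Theorem \ref{inv_measure} fails, so that no invariant measure exists. Since every nonzero $f\in\ker A$ is a constant with $Lf$ a nonzero multiple of $(1,\dots,1)$ and $B^*=-L$, the condition $Q^{\frac12}B^*f=0$ is equivalent to $(1,\dots,1)\in\ker Q$; for the diagonal covariance $Q=\diag(q_{\mv})_{\mv\in\mV}$ this reads $\sum_{\mv\in\mV}q_{\mv}=0$, i.e.\ $Q=0$. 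Hence, as soon as noise is genuinely present, the second alternative cannot hold, no invariant measure exists, and the equivalence ``invariant measure exists $\iff$ $\mathbb{T}$ exponentially stable'' follows.

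For uniqueness, note that whenever an invariant measure exists the previous step forces $\ker A=\{0\}$, hence exponential stability of $\mathbb{T}$; then \eqref{eq:stocauchy} is an exponentially stable linear (Ornstein--Uhlenbeck) equation whose transition semigroup satisfies $\mathcal{P}_t^*\nu\to N(0,Q_\infty)$, with $Q_\infty=\int_0^{\infty}\mathbb{T}_{-1}(s)BQB^*\mathbb{T}^*(s)\ds$, for every initial law $\nu$, so the invariant measure is unique and equals the centred Gaussian $N(0,Q_\infty)$. The main obstacle is the step in the third paragraph: passing from $Lf\neq 0$ to $Q^{\frac12}Lf\neq 0$ requires the noise to be non-degenerate in the constant direction, i.e.\ $(1,\dots,1)\notin\ker Q$, which for the diagonal covariances used throughout is exactly the requirement that noise be present in at least one vertex; without it both the ``only if'' direction and the uniqueness assertion would fail.
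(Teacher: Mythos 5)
Your proposal is correct and, at the top level, it is the same argument as the paper's: both bypass \cite[Thm.~9.1.1.(iii)]{DPZbook96}, verify the hypotheses of Theorem \ref{inv_measure} via Propositions \ref{prop:assumptionfornet} and \ref{prop:Bstar} (in particular $B^*=-L$, hence $B\in\mathcal{L}(\mcY,\mcH_{-\frac12})$), identify exponential stability of $\mathbb{T}$ with $\ker A=\{0\}$, and then read the equivalence off Theorem \ref{inv_measure}. The genuine difference is the ingredient used to control $\ker A$: the paper quotes Kurasov's ground-state theorem \cite[Thm.~1]{Ku19} (a nontrivial kernel is one-dimensional and spanned by a strictly positive function), while you compute the kernel directly from the quadratic form, using that $\langle Af,f\rangle_{\mcH}=-\sum_{\me\in\mE}\int_0^{\ell_{\me}}\bigl(|f'_{\me}|^2+p_{\me}|f_{\me}|^2\bigr)\dx$ forces $f$ to be edgewise constant and to vanish wherever $p$ is nontrivial, so that for connected $\mG$ the kernel is nontrivial only if $p\equiv0$, in which case it consists of the constants. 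Your route is more elementary and self-contained and pins down exactly when $\ker A\neq\{0\}$; the paper's citation is shorter and yields strict positivity in one stroke. Either way, what is needed downstream is only that a spanning kernel element $f$ has a nonzero (indeed strictly positive) vertex-value vector $Lf$. Your uniqueness argument (convergence of the Ornstein--Uhlenbeck laws to $N(0,Q_\infty)$ under exponential stability) is the standard one that the paper leaves implicit in ``thus the claim follows''.

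The caveat you flag at the end is a real issue, and it is a gap in the paper's proof rather than a defect of yours. To exclude an invariant measure when $\ker A\neq\{0\}$, Theorem \ref{inv_measure} requires $Q^{\frac12}B^*f=-Q^{\frac12}Lf\neq0$, and this does not follow from $Lf\neq0$ alone: one needs $Lf\notin\ker Q$. The paper's proof passes over this silently, yet Corollary \ref{cor:inv} is stated for an arbitrary covariance \eqref{eq:kovmtx}; for instance, with $p\equiv0$ and $Q=I-\frac1n\mathbf{1}\mathbf{1}^{\top}$ (so the all-ones vector $\mathbf{1}$ lies in $\ker Q$), Theorem \ref{inv_measure} produces an invariant measure although $\mathbb{T}$ is not exponentially stable, so the stated equivalence fails. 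Some non-degeneracy of $Q$ in the direction $Lf$ --- e.g.\ $Q$ positive definite, or, as you assume, $Q$ diagonal and nonzero --- is genuinely needed for the ``only if'' direction. Your proof makes that hypothesis explicit (at the cost of assuming diagonality, which the corollary does not), whereas the paper's proof hides it.
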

    \begin{proof}
        Propositions \ref{prop:assumptionfornet} and \ref{prop:Bstar} imply that for the problem \eqref{eq:stochnetKnoise} the assumptions of the above theorem are satisfied. By \cite[Thm.~1]{Ku19}, if $\ker A$ is not empty, it is a 1 dimensional subspace and the corresponding eigenvector can be chosen to be strictly positive. Hence, by Theorem \ref{inv_measure}, the invariant measure exists if and only if $\ker A=\emptyset$, thus the claim follows.
    \end{proof}

\begin{appendices}

	 \section{Regularity of the solution}\label{sec:regularity}

We again consider system \eqref{eq:stochnetKnoise} having mild solution  
\[w(t)=\mathbb{T}(t)z_0+\int_0^t\mathbb{T}_{-1}(t-s)Bd\beta(s)\notag:=\mathbb{T}(t)z_0+K(t), \]
with $B\coloneqq (1-A_{-1})\Dir_1$, see \eqref{eq:ZKt}. In this section we show that the  uniform boundedness assumption on the vertex values of eigenfunctions of Theorem 3.8 in \cite{KS23} is unnecessary to obtain the desired regularity of $K(t)$, that is, of $w(t)$.

	\begin{proposition}\label{prop:solH14}
		For $\alpha<\frac{1}{4}$, the stochastic convolution process $K(\cdot)$ defined in \eqref{eq:ZKt} has a continuous version in $\mcH_{\alpha}$. In particular, $w(\cdot)$ has an $\mathcal{H}_\alpha$ continuous version.
	\end{proposition}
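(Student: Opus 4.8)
The plan is to obtain the $\mcH_\alpha$-continuity of the stochastic convolution $K$ by the factorization method of \cite{DPZbook}, the only nonstandard ingredient being an \emph{averaged} bound on the vertex values of the eigenfunctions which replaces the pointwise hypothesis $\sup_{k,\mv}|f_k(\mv)|<\infty$ used in \cite[Thm.~3.8]{KS23}. I would first pass to the eigenbasis $(f_k)_{k\in\nat}$ of $A$ and reduce the problem to Hilbert--Schmidt estimates. Using that $A$ is self-adjoint, that $B^*=-L$ on $\mcH_{\frac12}$ by Proposition \ref{prop:Bstar} (so that $B^*f_k=-(f_k(\mv))_{\mv\in\mV}$), and that $Q$ is a fixed matrix, one computes for every $\sigma>0$ and $\alpha\ge 0$
\begin{equation}
\begin{split}
\bigl\|(\lambda-A)^\alpha\mathbb{T}_{-1}(\sigma)BQ^{\frac12}\bigr\|_{\HS(\mcY,\mcH)}^2
&=\sum_{k\in\nat}\e^{-2\lambda_k\sigma}(\lambda+\lambda_k)^{2\alpha}\bigl\|Q^{\frac12}Lf_k\bigr\|_{\mcY}^2\\
&\le C\,\sigma^{-2\alpha}\sum_{k\in\nat}\e^{-\lambda_k\sigma}\sum_{\mv\in\mV}|f_k(\mv)|^2,
\end{split}
\end{equation}
where I split $\e^{-2\lambda_k\sigma}$ and used the elementary bound $(\lambda+\lambda_k)^{2\alpha}\e^{-\lambda_k\sigma}\le C_\alpha\sigma^{-2\alpha}$ for $\sigma\in(0,1]$. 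Thus everything reduces to controlling the vertex trace of the on-diagonal heat kernel, $\sum_{k}\e^{-\lambda_k\sigma}\sum_{\mv}|f_k(\mv)|^2$.

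The crucial step is to bound this quantity by the one-dimensional rate $\sigma^{-1/2}$. Since $\mcH_{\frac12}\cong\{z\in\mcH^1:\ z\ \text{continuous on }\mG\}$ embeds continuously into $L^\infty(\mG)$ by \eqref{eq:H12def} and the one-dimensional Sobolev embedding, the symmetric sub-Markovian semigroup $\mathbb{T}$ is ultracontractive with $\|\mathbb{T}(\sigma)\|_{L^1(\mG)\to L^\infty(\mG)}\le C\sigma^{-1/2}$ for $\sigma\in(0,1]$; equivalently, the pointwise Weyl estimate $\sum_{\lambda_k\le\Lambda}|f_k(x)|^2\le C\sqrt{\Lambda}$ holds uniformly in $x\in\mG$. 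Evaluating the on-diagonal heat kernel $\sum_k\e^{-\lambda_k\sigma}|f_k(\mv)|^2\le C\sigma^{-1/2}$ at each of the $|\mV|=n$ vertices and summing yields
\begin{equation}
\sum_{k\in\nat}\e^{-\lambda_k\sigma}\sum_{\mv\in\mV}|f_k(\mv)|^2\le C\,\sigma^{-1/2},\qquad\sigma\in(0,1],
\end{equation}
and therefore $\|(\lambda-A)^\alpha\mathbb{T}_{-1}(\sigma)BQ^{\frac12}\|_{\HS}^2\le C\sigma^{-2\alpha-1/2}$. The point is that \emph{on average} the vertex values behave as if bounded, so that only the genuinely one-dimensional singularity $\sigma^{-1/2}$ survives, and this is precisely what pins the threshold at $\alpha<\frac14$ (for $\alpha=0$ one recovers the finiteness \eqref{eq:TBQHS} assumed in Assumptions \ref{ass:noisecontrol}).

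I would then run the factorization. Fixing $\alpha<\frac14$ and $\gamma\in(0,\frac14-\alpha)$, write $K(t)=\tfrac{\sin\pi\gamma}{\pi}\int_0^t(t-s)^{\gamma-1}\mathbb{T}(t-s)Y(s)\ds$ with $Y(s)=\int_0^s(s-r)^{-\gamma}\mathbb{T}_{-1}(s-r)B\,d\beta(r)$. By the It\^o isometry and the bound just obtained,
\begin{equation}
\mathbb{E}\bigl\|Y(s)\bigr\|_{\mcH_\alpha}^2=\int_0^s(s-r)^{-2\gamma}\bigl\|(\lambda-A)^\alpha\mathbb{T}_{-1}(s-r)BQ^{\frac12}\bigr\|_{\HS}^2\,dr\le C\int_0^s\tau^{-2\gamma-2\alpha-\frac12}\,d\tau,
\end{equation}
which is finite and uniformly bounded on $[0,T]$ exactly because $2\gamma+2\alpha+\frac12<1$. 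As $Y(s)$ is an $\mcH_\alpha$-valued Gaussian random variable, all its moments are comparable to the second, so $\mathbb{E}\int_0^T\|Y(s)\|_{\mcH_\alpha}^p\ds<\infty$ for every $p\ge2$; choosing $p$ with $\gamma>1/p$ and using that $\mathbb{T}$ restricts to a $C_0$-semigroup on $\mcH_\alpha$, the factorization lemma (see \cite{DPZbook}) shows that $g\mapsto\int_0^{\cdot}(\cdot-s)^{\gamma-1}\mathbb{T}(\cdot-s)g(s)\ds$ maps $L^p(0,T;\mcH_\alpha)$ into $C([0,T];\mcH_\alpha)$, whence $K$ has a version in $C([0,T];\mcH_\alpha)$. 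Since $w(t)=\mathbb{T}(t)z_0+K(t)$ and $\mathbb{T}(\cdot)z_0$ is continuous into $\mcH_\alpha$ on $(0,T]$ by analyticity, $w$ inherits an $\mcH_\alpha$-continuous version.

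The main obstacle is the sharp rate in the second step. A naive pointwise trace estimate $|f_k(\mv)|^2\lesssim\sqrt{\lambda_k}$ — obtained by interpolating $|f(\mv)|^2\le C(\varepsilon\|f'\|_{L^2}^2+\varepsilon^{-1}\|f\|_{L^2}^2)$ with $\varepsilon=\lambda_k^{-1/2}$ and $\|f_k'\|_{\mcH}^2\le\lambda_k$ — is far too lossy: it would replace $\sigma^{-1/2}$ by $\sigma^{-1}$ and yield only $\alpha<0$. It is therefore essential to exploit the spectral averaging encoded in the heat kernel, and the heart of the argument is the verification that the Kirchhoff realization on the compact graph $\mG$ is ultracontractive with the correct one-dimensional exponent $\tfrac12$.
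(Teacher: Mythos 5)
Your proof is correct, and although its skeleton --- reduce to a weighted Hilbert--Schmidt bound for $\mathbb{T}_{-1}(\cdot)BQ^{\frac12}$ and run the Da Prato--Zabczyk factorization --- coincides with the paper's (the paper's appeal to a modified \cite[Thm.~2.3]{DPZ93} \emph{is} that factorization criterion), the key quantitative ingredient is obtained by a genuinely different route. The paper writes $\|\mathbb{T}_{-1}(t)BQ^{\frac12}\|^2_{\HS(\mcY,\mcH_{\alpha})}$ as $\|(1-A)^{\frac12-\ve}\mathbb{T}(t)(1-A)^{\alpha+\ve-\frac12}BQ^{\frac12}\|^2_{\HS(\mcY,\mcH)}$ and quotes \cite[Thm.~4.1]{BKKS23} (elliptic regularity of the Dirichlet map: $(1-A)^{\frac12+\alpha+\ve}\Dir_1$ is bounded from $\mcY$ to $\mcH$ for $\alpha<\frac14$), which, combined with analyticity and $\dim\mcY<\infty$, yields the integrable rate $t^{-(1-2\ve)}$. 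You instead diagonalize, use $B^*=-L$ from Proposition \ref{prop:Bstar}, and bound the vertex heat trace $\sum_k \e^{-\lambda_k\sigma}\sum_{\mv}|f_k(\mv)|^2$ by the one-dimensional on-diagonal rate $\sigma^{-1/2}$ via ultracontractivity. The two inputs are essentially equivalent --- a Laplace-transform computation converts your kernel bound into $B\in\mathcal{L}(\mcY,\mcH_{\alpha+\ve-\frac12})$ for $\alpha+\ve<\frac14$, and conversely --- so both proofs hinge on the same $\tfrac14$-threshold coming from one-dimensional vertex traces. What your route buys is self-containedness (no appeal to \cite{BKKS23}) and the sharper, $\alpha$-explicit rate $\sigma^{-2\alpha-\frac12}$ in place of $t^{-(1-2\ve)}$; what it costs is that the ultracontractivity claim must itself be justified.

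On that last point, one step needs more care than you give it: $\|\mathbb{T}(\sigma)\|_{L^1\to L^\infty}\le C\sigma^{-1/2}$ controls the heat kernel only almost everywhere, while you evaluate the on-diagonal bound at the vertices, a Lebesgue-null set. This is fixable without touching the kernel at all. Since $\mathbb{T}(\sigma/2)u\in\mcD(A)\subset\mcH_{\frac12}\subset C(\mG)$ and the eigenfunction expansion of $\mathbb{T}(\sigma/2)u$ converges in $\mcH_{\frac12}$, hence uniformly, one has $\sum_k\e^{-\lambda_k\sigma}|f_k(\mv)|^2=\sup\{|(\mathbb{T}(\sigma/2)u)(\mv)|^2\colon \|u\|_{\mcH}=1\}\le \|\mathbb{T}(\sigma/2)\|^2_{\mcH\to L^\infty(\mG)}$, and the latter is $\le C\sigma^{-1/2}$ by the interpolation inequality $\|g\|_{L^\infty(\mG)}\le C\|g\|^{1/2}_{\mcH_{\frac12}}\|g\|^{1/2}_{\mcH}$ (one-dimensional Agmon/Sobolev on each edge, using \eqref{eq:H12def}) together with analyticity, $\|\mathbb{T}(\sigma/2)u\|_{\mcH_{\frac12}}\le C\sigma^{-1/2}\|u\|_{\mcH}$. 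With this substitution --- which also bypasses the sub-Markovian/$L^1$ theory entirely --- your argument is complete.
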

	\begin{proof}
		Similarily to the argument in the proof of \cite[Thm.~3.8]{KS23}, by a straightforward modification of \cite[Thm.~2.3]{DPZ93} to include the covariance matrix $Q$, we have to show that if $\alpha<\frac{1}{4}$, then there exists $\gamma>0$ such that
		\begin{equation*}
			\int_0^{T}t^{-\gamma}\left\|\sgrT_{-1}(t)B Q^\frac12\right\|^2_{\HS(\mcY,\mcH_{\alpha})}\dt <+\infty.
		\end{equation*}
		By \cite[Thm.~4.1]{BKKS23}, 
	for $\alpha<\frac{1}{4}$ and $\ve>0$ small enough, $(1-A)^{\frac{1}{2}+\alpha+\ve}\Dir_1$ is a bounded operator from $\mathcal{Y}$ to $\mathcal{H}$. Hence, $B\in\mathcal{L}\left(\mathcal{Y},\mathcal{H}_{\alpha+\varepsilon-\frac{1}{2}}\right)$ holds for any $\alpha<\frac{1}{4}$ and $\varepsilon>0$ small enough. Thus,
	\begin{equation}
 \begin{split}
				&\int_0^{T}t^{-\gamma}\left\|\sgrT_{-1}(t)B Q^\frac12\right\|^2_{\HS(\mcY,\mcH_{\alpha})}\dt\\
                &=\int_0^{T}t^{-\gamma}\left\|(1-A)^{\frac{1}{2}-\varepsilon}\sgrT(t)(1-A)^{\alpha+\varepsilon-\frac{1}{2}}B Q^\frac12\right\|^2_{\HS(\mcY,\mcH)}\dt\\
				&\leq\int_0^{T}t^{-\gamma}\left\|(1-A)^{\frac{1}{2}-\varepsilon}\sgrT(t)\right\|^2\dt \cdot \left\|(1-A)^{\alpha+\varepsilon-\frac{1}{2}}B\right\|^2_{\HS(\mcY,\mcH)}\cdot\Tr(Q)\\
				&\leq c_T\cdot \int_0^{T}t^{-\gamma}\cdot \frac{1}{t^{1-2\ve}}\dt, 	
 \end{split}
 \end{equation}
where, in the last inequality we used the analiticity of the semigroup $\mathbb{T}$ and the fact that the Hilbert-Schmidt norm of $(1-A)^{\alpha+\varepsilon-\frac{1}{2}}B$  is finite since $\mcY$ is finite dimensional. The last integral is finite if $\gamma<2\ve$, which implies the statement.
	\end{proof}

	 \section{ST active set of a tree}\label{sec:STtree}

In this section we show how \cite[Thm.~3.14]{AvdZhao22} can be used in the proof of Theorem \ref{thm:main}. To keep the paper self-contained, we cite all necessary definitions and results from \cite{AvdZhao21, AvdZhao22} and prove in Proposition \ref{prop:STtree} how their apply for the situation in Theorem \ref{thm:main}.\\

We denote by $\Vec{\mG}=(\mV,\Vec{\mE})$ a directed graph obtained by orienting the edges of $\mG=(\mV,\mE)$. We call a directed graph \emph{acyclic}, if there is no directed circle (directed path having the same vertex as starting and finishing point, see Definition \ref{defi:treeGamma}) in it. By \cite[Lem.~2.2]{AvdZhao22}, there is always a directed acyclic graph -- abbreviated as DAG --, $\Vec{\mG}=(\mV,\Vec{\mE})$ based on $\mG$.

\begin{definition}{\cite[Def.~3.1]{AvdZhao22}}\label{defi:TFPU}
    Let $\Vec{\mG}=(\mV,\Vec{\mE})$ be a DAG (directed acyclic graph) based on $\mG$. Let $U$ be a union of directed paths. We say $U$ is a \emph{tangle-free (TF) path union} of $\mG$ if $U$ satisfies the conditions:
    \begin{enumerate}
        \item The direction of all edges are the same as the direction of all paths they are on.
        \item All paths $P\in U$ are disjoint except for the starting and finishing vertices.
        \item If a finishing vertex $\mv$ of a path is the starting vertex of another path, there must be an incoming edge of $\mv$ that is not a finishing edge, and an outgoing edge of $\mv$ that is not a starting edge.
        \item $\Vec{\mG}=\bigcup_{P\in U}P$.
    \end{enumerate}
\end{definition}

As before, $\mE_{\mv}$ denotes the set of all edges which are incident to $\mv$. In $\Vec{\mG}$, we denote by $\mE^+_{\mv}$ and $\mE^-_{\mv}$ the set of edges which are outgoing and incoming edges, respectively, for $\mv$. A \emph{source} is a vertex without incoming edges; a \emph{sink} is a vertex without outgoing edges. We denote the sets of sources and sinks by $\Vec{\mG}^+$ and $\Vec{\mG}^-$, respectively.

The following concept is crucial in the proof of Theorem \ref{thm:main}.

\begin{definition}{\cite[Def.~3.2]{AvdZhao22}} \label{defi:STactive}
Let $\Vec{\mG}$ be a DAG based on $\mG$.
    We say that $I^*$ is a \emph{single-track (ST) active set of vertices} of $\Vec{\mG}$ if
    \[I^*=\{\mv: \mv\in \Vec{\mG}^+\},\]
    $J^*$ is a \emph{single-track (ST) active set of edges} of $\Vec{\mG}$ if
    \begin{equation}\label{eq:Jstar}
        J^*=\bigcup_{\mv\in\mV} \mE^*_{\mv},
    \end{equation}
    with 
    \[\mE^*_{\mv}=\text{all but one elements of }\mE^+_{\mv},\; \mv\in\mV.\]
    The set $\{I^*,J^*\}$ is then called a \emph{ST active set}.
\end{definition}
Observe that $I^*$ is uniquely determined by $\Vec{\mG}^+$. However, ST active set of edges can be defined in several ways. By the following result, there is a one-to-one correspondence between the ST active set of edges and the TF path unions  of $\mG$.

\begin{lemma}{\cite[Lem.~3.3]{AvdZhao22}}\label{lem:TFPUST}
 Let  $\Vec{\mG}$ be a DAG and $I^*$ its ST active set of vertices. Then each TF path union $U$ corresponds to a ST active set of edges $J^*$ such that for every path in $U$, the index of its starting edge is in
 \[\overline{J}^*=\left(\bigcup_{\mv\in I^*}\mE_{\mv}\right)\cup J^*.\]
 Conversely, from every ST active set of edge indices $J^*$ one can construct a TF path union $U$, such that for every $\me\in \overline{J}^*$, $\me$ is the starting edge of a path in $U$.  
\end{lemma}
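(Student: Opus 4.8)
The plan is to prove both directions of the correspondence by analysing, vertex by vertex, how the edges incident to each $\mv\in\mV$ are distributed among the paths of $U$, and to exploit the acyclicity of $\Vec{\mG}$ to guarantee that the constructed paths are well defined. Throughout I would use the structural consequence of conditions (1)--(4) of Definition \ref{defi:TFPU} that the paths are edge-disjoint and cover $\Vec{\mG}$, so that \emph{every edge lies on exactly one path}, and that \emph{at most one path passes through a given vertex $\mv$ internally} (all other paths meeting $\mv$ must start or finish there). Consequently the outgoing edges at $\mv$ split into at most one \emph{continuation} edge, used by the through-path when present, and the remaining outgoing edges, each of which is then the starting edge of a path beginning at $\mv$; the incoming edges split analogously into at most one continuation edge and finishing edges.

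For the forward direction I would start from a TF path union $U$ and define $J^*$ by selecting, at every non-sink $\mv$, the continuation outgoing edge as the one excluded element of $\mE^+_{\mv}$ (see Definition \ref{defi:STactive}) when a through-path is present, and an arbitrary outgoing edge otherwise. The key point, which I would verify using condition (3), is that a non-sink vertex carrying no through-path must be a \emph{source}: if such a $\mv$ had an incoming edge it would be a finishing vertex, and any leftover outgoing edge would be a starting edge, whence condition (3) would force a through-path, a contradiction. Therefore the only starting edges of $U$ that fail to lie in $J^*$ occur at sources $\mv\in I^*$, where they are reinstated by the term $\bigcup_{\mv\in I^*}\mE_{\mv}$; hence every starting edge of $U$ lies in $\overline{J}^*$.

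For the converse, given a ST active set of edges $J^*$, I would let $c(\mv)$ denote, for each non-sink $\mv$, the unique outgoing edge of $\mv$ not in $J^*$, declare the edges of $\overline{J}^*$ to be the starting edges, and pair, at each non-source non-sink vertex $\mv$, the edge $c(\mv)$ with one arbitrarily chosen incoming edge of $\mv$. I would then form $U$ by following these pairings: a path starts at an edge of $\overline{J}^*$ and, upon reaching a vertex $\mv$ through the edge paired to $c(\mv)$, continues along $c(\mv)$, terminating otherwise. I would check that every edge is covered exactly once --- starting edges begin paths, while each continuation edge $c(\mv)$ is reached by exactly one path whose backward trace terminates at a starting edge precisely because $\Vec{\mG}$ is acyclic --- and that $U$ satisfies (1)--(4): internal disjointness (2) holds since only $c(\mv)$ can carry a path through $\mv$, and (3) holds automatically because every non-source non-sink vertex carries exactly one through-path by construction. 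By design the starting edges are exactly the elements of $\overline{J}^*$.

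The main obstacle I anticipate is the bookkeeping at the junction vertices: one must read conditions (2) and (3) of Definition \ref{defi:TFPU} correctly as asserting that at most one path traverses a vertex internally while arbitrarily many may start or finish there, and then deduce the rigidity that a through-path is forced at every non-source non-sink vertex. Once this is established the assignment $U\leftrightarrow J^*$ through the continuation edges is essentially forced, and the role of $\overline{J}^*$ rather than $J^*$ is exactly to recover the starting edges lost at the sources, where the chosen continuation edge is nonetheless a genuine starting edge; the acyclicity of $\Vec{\mG}$ is used only to ensure that following the pairings backward terminates, so that the paths are finite.
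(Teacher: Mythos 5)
The paper never proves this lemma: it is quoted verbatim from \cite[Lem.~3.3]{AvdZhao22}, and the paper's own contribution in Appendix \ref{sec:STtree} is only Proposition \ref{prop:STtree}, which \emph{applies} the lemma to trees. So there is no internal proof to compare against, and your argument must be judged on its own merits; it is correct and self-contained. The forward direction rests on exactly the right rigidity observation: by condition (3) of Definition \ref{defi:TFPU}, every vertex that is neither a source nor a sink must carry a through-path, so at such a vertex the excluded outgoing edge can be chosen to be the continuation edge, every starting edge at a non-source vertex then lies in $J^*$, and the starting edges at sources are absorbed by the term $\bigcup_{\mv\in I^*}\mE_{\mv}$ in $\overline{J}^*$. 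The converse construction --- pair each continuation edge $c(\mv)$ with one incoming edge, follow the pairings forward, and use acyclicity so that the backward trace of every edge terminates at an element of $\overline{J}^*$ --- does yield exactly one path through every edge, and your verification of (1)--(4) is sound, including the check that $c(\mv)\notin\overline{J}^*$ for non-source non-sink $\mv$, which is what makes condition (3) hold automatically. The one point you rightly make explicit, and which deserves emphasis, is the reading of condition (2): it must be understood as forbidding two paths from sharing an \emph{internal} vertex while still permitting paths to start or finish at a vertex that is internal to another path; under the stricter reading, condition (3) would be self-contradictory, since it explicitly contemplates a finishing/starting vertex that also has a non-finishing incoming edge. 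Your proof adopts this interpretation consistently, which is the one intended in \cite{AvdZhao22}.
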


Let $\mG$ be a tree. Our upcoming result is based on the following lemma. We recall that we denote the set of boundary vertices -- that is, vertices of degree $1$ -- by $\Gamma$, and $P(\mv,\mv')$ denotes the uniquely determined path in $\mG$ with starting point $\mv$ and finishing point $\mv'$, see Definition \ref{defi:treeGamma}.

\begin{lemma}{\cite[Lem.~1]{AvdZhao21}}\label{lem:treepathU}
    Let $\mG$ be a tree graph  and $I\subset \Gamma$ be a subset of its boundary vertices. Then $\mG$ has a union representation
    \begin{equation}\label{eq:pathtree}
        \mG=\bigcup_{\mv\in I} P(\mv,\mv'),
    \end{equation}    
    of disjoint (except the endpoints) paths if and only if 
    \begin{equation}\label{eq:IGammagamma1}
     I=\Gamma\text{ or }I=\Gamma\setminus\{\gamma_1\}  
    \end{equation}
    for an arbitrary $\gamma_1\in\Gamma$.
\end{lemma}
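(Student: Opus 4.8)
The plan is to recast the path-decomposition condition as a statement about edge orientations, for which the constraint on $I$ becomes transparent. Given a decomposition $\mG=\bigcup_{\mv\in I}P(\mv,\mv')$ into pairwise interior-disjoint paths covering $\mG$, I would orient every edge of $\mG$ in the direction induced by the unique path containing it, pointing from that path's starting vertex $\mv\in I$ towards its finishing vertex $\mv'$. Since the paths cover $\mG$ and are edge-disjoint, this assigns a well-defined direction to each edge, and acyclicity is automatic because $\mG$ is a tree. Throughout I read ``disjoint except the endpoints'' as: no vertex lies in the interior of two distinct paths; equivalently, at every vertex at most one path passes through.

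For the necessity direction I would compute out-degrees. At an interior vertex $\mv$ no path starts, since all starting vertices lie in $I\subseteq\Gamma$ and are therefore leaves; hence the only possible outgoing edge at $\mv$ is the continuation of the unique path passing through $\mv$, while every finishing path contributes an incoming edge. As at most one path passes through $\mv$, every interior vertex has out-degree at most $1$, and a leaf has out-degree $0$ or $1$ trivially. Thus every vertex has out-degree at most $1$, while the out-degrees sum to $|\mE|=|\mV|-1$; therefore exactly one vertex $r$ has out-degree $0$ and all others out-degree $1$. Now a boundary vertex $\gamma\in\Gamma\setminus I$ is not a starting vertex, so its single edge is incoming and $\gamma$ is a finishing vertex with out-degree $0$, forcing $\gamma=r$. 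Since $r$ is unique, at most one boundary vertex lies outside $I$, i.e. $I=\Gamma$ or $I=\Gamma\setminus\{\gamma_1\}$.

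For sufficiency I would reverse this construction. I choose a root $r$: an arbitrary interior vertex when $I=\Gamma$ (one exists whenever $\mG$ has at least two edges), and $r=\gamma_1$ when $I=\Gamma\setminus\{\gamma_1\}$. Orient $\mG$ as the in-tree towards $r$, so that every vertex other than $r$ has exactly one outgoing edge (the first edge on its unique path to $r$) and each interior vertex has in-degree $d_{\mv}-1\ge 1$. At each interior vertex I then splice one incoming edge to the outgoing edge, declaring the remaining incoming edges to be finishing edges. Following the resulting concatenations yields directed paths whose interiors are pairwise disjoint (each vertex is spliced at most once), which cover $\mG$, and which start precisely at the sources of the orientation. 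The sources are exactly the leaves with out-degree $1$, namely $\Gamma$ when $r$ is interior and $\Gamma\setminus\{\gamma_1\}$ when $r=\gamma_1$; in both cases this set is $I$, as required.

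The main obstacle I anticipate is not the combinatorics but fixing the precise meaning of ``disjoint (except the endpoints)'': the equivalence fails under the stricter reading in which two paths may meet only at vertices that are endpoints of \emph{both} (under that reading a three-edge star with $I=\Gamma\setminus\{\gamma_1\}$ would be excluded, contrary to the statement), so one must commit to the interior-disjoint reading above, under which both the three- and four-edge stars behave exactly as predicted. A secondary point to dispose of is the degenerate single-edge tree, where $I=\Gamma$ cannot be realised by two nontrivial paths and which should be excluded or treated separately.
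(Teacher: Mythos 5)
You should first note that the paper contains no proof of this lemma at all: it is quoted from \cite{AvdZhao21} (Lemma~1 there) and used as a black box in Appendix~\ref{sec:STtree}, so there is no internal proof to compare yours against; your argument stands as a self-contained substitute, and it is correct. Your necessity direction --- orient each edge along the unique path covering it; internal disjointness plus the fact that all starting vertices lie in $I\subset\Gamma$ (hence are leaves) forces out-degree at most one at every vertex; the out-degrees sum to $|\mE|=|\mV|-1$, so exactly one vertex has out-degree zero; and any boundary vertex outside $I$ has out-degree zero, hence coincides with that unique vertex --- is sound. Your sufficiency direction (orient the tree towards a root $r$, taken to be an interior vertex if $I=\Gamma$ and $r=\gamma_1$ otherwise, then splice at every vertex one incoming edge to the unique outgoing edge) is also sound: backward tracing shows each resulting chain starts at a source, the sources are exactly $I$, and the in-tree orientation makes each chain a genuine path, with pairwise disjoint interiors because only one incoming edge is spliced per vertex. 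This passage back and forth between path unions and edge orientations is precisely the mechanism the paper itself exploits in Lemma~\ref{lem:TFPUST} and Proposition~\ref{prop:STtree}, so your proof integrates seamlessly with the appendix, whereas the paper simply outsources the combinatorial core to \cite{AvdZhao21}. Both of your caveats are legitimate and you resolve them the right way: the disjointness in the statement must indeed be read as ``no vertex is interior to two distinct paths,'' since the paper's own application in Proposition~\ref{prop:STtree} (for instance a star graph with $I=\Gamma\setminus\{\gamma_1\}$, where one path necessarily finishes at an interior vertex of another) is incompatible with the stricter reading; and the single-edge tree with $I=\Gamma$ is a genuine degenerate exception to the ``if'' direction unless one allows the two one-edge paths to coincide.
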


Notice that a path union representation \eqref{eq:pathtree} in the tree graph $\mG$ uniquely determines an orientation of the edges that is a directed graph, $\Vec{\mG}$, such that each edge is oriented according to the orinetation of the (unique) path it is contained in.

\begin{proposition}\label{prop:STtree}
   For a tree graph $\mG$ and the vertex set $I=\Gamma$ or $I=\Gamma\setminus\{\gamma_1\}$ (with $\gamma_1\in\Gamma$ arbitrary), consider a path union representation \eqref{eq:pathtree} from Lemma \ref{lem:treepathU} and the corresponding directed graph $\Vec{\mG}$. Then  \eqref{eq:pathtree} is a TF path union of $\mG$ and the corresponding ST active set given by Lemma \ref{lem:TFPUST} is
   \[I^*= I,\quad J^*=\emptyset.\]
   Furthermore, $I^*\subset \Gamma$, $J^*=\emptyset$ is an ST active set if and only if $I^*=\Gamma$ or $I^*:=\Gamma\setminus\{\gamma_1\}$ for some $\gamma_1\in\Gamma$.
\end{proposition}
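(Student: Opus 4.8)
The plan is to take the path union representation \eqref{eq:pathtree} guaranteed by Lemma~\ref{lem:treepathU} and orient each edge of $\mG$ in the direction of the (unique) path containing it, obtaining $\Vec{\mG}$. Since $\mG$ is a tree its underlying graph has no cycles, so any orientation is acyclic and $\Vec{\mG}$ is automatically a DAG. It then remains to verify the four conditions of Definition~\ref{defi:TFPU}: conditions (1) and (4) are immediate from the construction of $\Vec{\mG}$ and from the fact that \eqref{eq:pathtree} covers all edges, while condition (2) is precisely the disjointness (except at endpoints) asserted in Lemma~\ref{lem:treepathU}. The key observation for condition (3) is that no finishing vertex of a path can be a starting vertex of another: every starting vertex lies in $I\subset\Gamma$ and has degree $1$, so if a path terminated at such a vertex its last edge would coincide with the first edge of the path starting there, contradicting edge-disjointness. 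Hence the hypothesis of condition (3) is never met and it holds vacuously.

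Next I would identify the ST active set. By Definition~\ref{defi:STactive}, $I^{*}=\Vec{\mG}^{+}$ is the set of sources. Each $\mv\in I$ is a boundary vertex whose single edge is the outgoing starting edge of its path, so $\mv$ has no incoming edge and is a source; conversely, any vertex outside $I$ is incident to some path either as an interior or as a finishing vertex (it is never a starting vertex), and in both cases it carries an incoming edge, hence is not a source. Therefore $I^{*}=I$. For the edge part, I would show that every vertex has at most one outgoing edge: a boundary vertex in $I$ has out-degree $1$, while a vertex not in $I$ is not a starting vertex, so its outgoing edges can only come from paths passing through it, and by the disjointness of the path union at most one path may use it as an interior vertex. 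Thus $\mE^{*}_{\mv}=\emptyset$ for every $\mv$, and the corresponding ST active set of edges in the sense of Lemma~\ref{lem:TFPUST} is $J^{*}=\emptyset$, as claimed.

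For the equivalence in the ``furthermore'' part, the direction assuming $I^{*}=\Gamma$ or $I^{*}=\Gamma\setminus\{\gamma_{1}\}$ follows from the previous two paragraphs together with Lemma~\ref{lem:treepathU}: such an $I$ admits a path union, which by the above is a TF path union corresponding to the ST active set $\{I,\emptyset\}$. For the converse, suppose $\{I^{*},\emptyset\}$ is an ST active set of some DAG $\Vec{\mG}$ with $I^{*}\subset\Gamma$. Here $J^{*}=\emptyset$ being a valid ST active set of edges forces $|\mE^{+}_{\mv}|\leq 1$ for every $\mv$, i.e.\ out-degree at most one everywhere. Since the out-degrees are $0$ or $1$ and sum to the number of edges $|\mV|-1$, exactly one vertex, say $r$, has out-degree $0$; together with acyclicity this makes $\Vec{\mG}$ an anti-arborescence (in-tree) rooted at $r$, all of whose edges point toward $r$. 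In such an orientation the sources are precisely the $\mG$-leaves different from $r$: an internal vertex always retains a child, hence an incoming edge, whereas a leaf other than $r$ has only its parent edge, which is outgoing. Consequently $I^{*}=\Gamma$ if $r$ is an interior vertex and $I^{*}=\Gamma\setminus\{r\}$ if $r$ is itself a boundary vertex, which is exactly the asserted dichotomy.

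The main obstacle I anticipate is the correct handling of the out-degree bound underlying $J^{*}=\emptyset$, since it rests on the precise meaning of ``disjoint except at the endpoints'' in Lemma~\ref{lem:treepathU}: one must rule out a vertex being interior to two distinct paths while still allowing it to serve as a shared finishing vertex (as happens at the centre of a star graph when one boundary vertex carries no control). Once this is secured, the delicate part is the forward implication of the equivalence, where the purely combinatorial hypothesis $J^{*}=\emptyset$ must be translated into the anti-arborescence structure via the edge-counting identity $\sum_{\mv}|\mE^{+}_{\mv}|=|\mV|-1$; reading off the sources of that structure is what pins down the exact dichotomy between $\Gamma$ and $\Gamma\setminus\{\gamma_{1}\}$.
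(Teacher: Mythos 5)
Your proof is correct, and the first half follows essentially the same lines as the paper's: both verify Definition \ref{defi:TFPU} by noting that all starting points lie in $I\subset\Gamma$, so no interior vertex starts a path and condition (3) holds vacuously; both then read off $I^*=I$ from the fact that every vertex outside $I$ acquires an incoming edge, and $J^*=\emptyset$ from the fact that disjointness leaves every vertex with at most one outgoing edge. Where you genuinely diverge is the converse of the ``furthermore'' statement. The paper stays inside the Avdonin--Zhao machinery: from the ST active set $\{I^*,\emptyset\}$ it uses the correspondence of Lemma \ref{lem:TFPUST} to produce a TF path union whose starting edges are the elements of $\overline{J}^*=\bigcup_{\mv\in I^*}\{\me_\mv\}$, so that all paths start in $I^*$, and then applies Lemma \ref{lem:treepathU} once more to force the dichotomy. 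You instead argue directly on the orientation: $J^*=\emptyset$ forces $|\mE^+_{\mv}|\le 1$ for every $\mv$; since the out-degrees sum to $|\mE|=|\mV|-1$, exactly one vertex $r$ has out-degree zero; acyclicity then makes $\Vec{\mG}$ the in-tree rooted at $r$, whose sources are precisely the leaves other than $r$, giving $I^*=\Gamma$ or $I^*=\Gamma\setminus\{r\}$. Both arguments are valid. The paper's route is shorter because it recycles the two cited lemmas; yours is self-contained (Lemma \ref{lem:TFPUST} is never needed for this direction) and proves marginally more, since the hypothesis $I^*\subset\Gamma$ comes out automatically rather than being assumed. You also correctly identified and resolved the one delicate point underlying the out-degree bound: ``disjoint except at endpoints'' permits a shared finishing vertex (the star centre) but forbids a vertex interior to two distinct paths.
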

\begin{proof}
 Since $\mG$ is a tree, $\Vec{\mG}$ is a DAG based on $\mG$. It is straightforward that (1), (2) and (4) of Definition \ref{defi:TFPU} are satisfied for the path union \eqref{eq:pathtree}. Since all paths have starting points in boundary vertices, no interior vertex -- that is, a vertex of degree at least $2$ -- is a starting point of a path, hence (3) of Definition \ref{defi:TFPU} is also satisfied. This fact also implies, that each interior vertex has (at least one) incoming edge, and in the case $I=\Gamma\setminus\{\gamma_1\}$, $\gamma_1$ is a sink. Thus, the sources of $\Vec{G}$ are exactly the elements of $I$, that is, $I^*=I$.

Otherwise, since the paths are disjoint, and no interior vertex is a starting point for a path, all interior vertices as well as the elements of $I$ have exactly one outgoing edge. Thus, by \eqref{eq:Jstar}, $J^*=\emptyset$.

For the last statement observe that by Lemma \ref{lem:TFPUST}, there is a one-to-one correspondence between the ST active sets and the TF path unions of $\mG$ such that the starting deges of the paths are exactly the elements of $\overline{J}^*$. Since $I^*\subset \Gamma$, $J^*=\emptyset$, we have
\[\overline{J}^*=\bigcup_{\mv\in I^*}\{\me_\mv\},\]
where $\me_\mv$ is the unique edge incident to $\mv\in I^*$. Hence, the paths in the TF path union corresponding to $\{I^*,J^*\}$ have starting points only in $I^*$. By Lemma \ref{lem:treepathU}, this can happen if and only $I^*$ has the given form. 
\end{proof}

 \end{appendices}

\section{Acknowledgements}

This article is based upon work from COST Action 18232 MAT-DYN-NET, supported by COST (European Cooperation in Science and Technology), www.cost.eu. 
M. Fkirine is supported by the Research Council of Finland grant 349002.

M.~Kov\'acs acknowledges the support of the Hungarian National Research, Development and Innovation Office (NKFIH)
through Grant no. TKP2021-NVA-02 and Grant no. K-145934.

E.~Sikolya was supported by the OTKA grant no. 135241.

\section*{Declarations}

The authors declare that there are no relevant financial or non-financial competing interests to report.

\end{document}